\newtheorem{theorem}{Theorem}[section]
\newtheorem{corollary}[theorem]{Corollary}
\newtheorem{remark}[theorem]{Remark}
\title{Characterization of Completely $k$-Magic Regular Graphs}
\author{Arnold A. Eniego \\ Science and Mathematics Department \\ National University \\ Manila, The Philippines \\ aaeniego@national-u.edu.ph \and I.J.L. Garces \\ Department of Mathematics \\ Ateneo de Manila University \\ Quezon City, The Philippines \\ ijlgarces@ateneo.edu}
\date{25 March 2016}
\begin{document}

\maketitle

\begin{abstract}
Let $k \in \mathbb{N}$ and $c \in \mathbb{Z}_k$, where $\mathbb{Z}_1=\mathbb{Z}$. A graph $G=(V(G),E(G))$ is said to be $c$-sum $k$-magic if there is a labeling $\ell:E(G) \rightarrow \mathbb{Z}_k \setminus \{0\}$ such that $\sum_{u \in N(v)} \ell(uv) \equiv c \pmod{k}$ for every vertex $v$ of $G$, where $N(v)$ is the neighborhood of $v$ in $G$. We say that $G$ is completely $k$-magic whenever it is $c$-sum $k$-magic for every $c \in \mathbb{Z}_k$. In this paper, we characterize all completely $k$-magic regular graphs.

\smallskip\noindent
\textbf{AMS Mathematics Subject Classification:} 05C70, 05C78

\smallskip\noindent
\textbf{Keywords:} regular graphs, graph factorization, $k$-magic graphs, completely $k$-magic graphs, sum spectrum
\end{abstract}

\section{Introduction}
Let $G=(V(G),E(G))$ be a finite, simple (unless otherwise stated) graph with vertex set $V(G)$ and edge set $E(G)$. A \textit{factor} of $G$ is a subgraph $H$ with $V(H) = V(G)$. In particular, if a factor $H$ of $G$ is $h$-regular, then we say that $H$ is an \textit{$h$-factor} of $G$. An \textit{$h$-factorization} of $G$ is a partition of $E(G)$ into disjoint $h$-factors. If such factorization of $G$ exists, then we say that $G$ is \textit{$h$-factorable}.

The following theorem is attributed to Petersen \cite{Petersen}, which we state using the versions of Akiyama and Kano \cite{Akiyama&Kano} and Wang and Hu \cite{Wang&Hu}.

\begin{theorem}[{\cite[Theorem 3.1]{Akiyama&Kano}, \cite{Petersen}, \cite[Theorem 10]{Wang&Hu}}]\label{Petersen}
Let $G$ be a $2r$-regular connected general graph (not necessarily simple), where $r \geq 1$. Then $G$ is $2$-factorable, and it has a $2k$-factor for every $k$, $1 \leq k \leq r$. Moreover, if $G$ is of even order, then it is $r$-factorable.
\end{theorem}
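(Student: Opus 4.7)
The plan is to use the fact that a connected $2r$-regular general graph has an Eulerian circuit (since every vertex degree is even), and to exploit this circuit in two different ways to obtain the three stated conclusions.

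For the 2-factorization, I would fix an Eulerian circuit and orient each edge in the direction of its traversal, so that every vertex acquires in-degree $r$ and out-degree $r$. From this orientation I would build a bipartite (multi)graph $H$ with vertex classes $\{v^+ : v \in V(G)\}$ and $\{v^- : v \in V(G)\}$, placing an edge $u^+ v^-$ for each oriented edge $u \to v$ of $G$. Then $H$ is $r$-regular bipartite, so by K\"onig's edge-coloring theorem $E(H)$ decomposes into $r$ perfect matchings $M_1, \ldots, M_r$. Each $M_i$ pulls back to a subset $F_i \subseteq E(G)$ in which every vertex has exactly one in-edge and one out-edge, so $F_i$ is $2$-regular. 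This yields a 2-factorization of $G$, and a $2k$-factor for $1 \le k \le r$ is then given by the union $F_1 \cup \cdots \cup F_k$.

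For $r$-factorability under the extra assumption that $|V(G)|$ is even, I would work directly on the Eulerian circuit $e_1 e_2 \cdots e_m$, where $m = r\,|V(G)|$, and color its edges alternately with two colors. Since $|V(G)|$ is even, $m$ is even, so the alternation closes up consistently around the cyclic circuit. At each vertex $v$, the circuit passes through $v$ exactly $r$ times; each pass uses one incoming and one outgoing edge, and these two consecutive edges in the circuit receive different colors. Hence every vertex has exactly $r$ edges of each color, and the two color classes form an $r$-factorization.

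The step I expect to demand the most care is checking that both constructions behave well when $G$ is a multigraph: parallel edges of $G$ produce parallel edges of $H$, so K\"onig's theorem must be invoked in its multigraph form, and I must also confirm that the alternating $2$-coloring on the Eulerian circuit remains well-defined when the circuit traverses repeated edges between the same pair of vertices or a loop. Apart from these technical checks the argument is elementary; the genuine content is the translation of the cyclic Eulerian structure into a balanced bipartite structure that K\"onig can split, on the one hand, and into a global cyclic $2$-coloring, on the other.
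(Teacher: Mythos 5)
The paper does not prove Theorem~\ref{Petersen} at all: it is quoted from the literature (Petersen, Akiyama--Kano, Wang--Hu) and used as a black box, so there is no internal proof to compare against. Your argument is correct and is essentially the classical one found in those sources: orienting along an Eulerian circuit to get a balanced orientation, passing to the associated $r$-regular bipartite multigraph, and splitting it into perfect matchings via K\"onig (in its multigraph form) yields the $2$-factorization and hence the $2k$-factors; the alternating two-coloring of the Eulerian circuit, which closes up because $m=r\,|V(G)|$ is even when $|V(G)|$ is, gives the $r$-factorization. The technical points you flag do check out: in the bipartite graph a loop at $v$ becomes an ordinary edge $v^+v^-$, and if selected it contributes degree $2$ at $v$ in its $2$-factor, which is admissible in a general graph; and in the alternating-coloring argument the right quantity to count at each vertex is edge-\emph{endpoints} per color class (a loop contributes two endpoints of one color, but the two passages it creates still each contribute one endpoint of each color, so every vertex ends with exactly $r$ endpoints, i.e.\ degree $r$, in each class). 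Nothing further is needed.
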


A graph $G$ is \textit{$\lambda$-edge connected} if it remains connected whenever fewer than $\lambda$ edges are removed.

\begin{theorem}\label{regularfactors} {\rm \cite{Gallai}} Let $r$ and $k$ be integers such that $1 \leq k < r$, and $G$ be a $\lambda$-edge connected $r$-regular general graph, where $\lambda \geq 1$. If one of the following conditions holds:
	\begin{enumerate}
		\item[{\rm (1)}] $r$ is even, $k$ is odd, $|G|$ is even, and $\frac{r}{\lambda} \leq k \leq r(1-\frac{1}{\lambda})$,
		\item[{\rm (2)}] $r$ is odd, $k$ is even, and $2 \leq k \leq r(1-\frac{1}{\lambda})$, or
		\item[{\rm (3)}] $r$ and $k$ are both odd and $\frac{r}{\lambda} \leq k$,
	\end{enumerate}
then $G$ has a $k$-regular factor.
\end{theorem}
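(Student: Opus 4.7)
My plan is to verify Tutte's $f$-factor theorem for $f \equiv k$. For an $r$-regular graph, Tutte's deficiency inequality to check reduces to
\[
k\lvert S\rvert + (r-k)\lvert T\rvert - e_G(S,T) - q(S,T) \;\geq\; 0
\]
for every pair of disjoint $S,T \subseteq V(G)$, where $e_G(S,T)$ is the number of edges between $S$ and $T$ and $q(S,T)$ counts the components $C$ of $G-(S\cup T)$ whose edge-boundary to $T$ has the wrong parity to admit a $k$-regular local extension on $C$. Petersen's Theorem~\ref{Petersen} does not itself produce odd-regular factors, but its $2$-factorability output simplifies matters whenever $r$ is even, and will serve as a preliminary reduction step that peels off a $2$-factor to lower $r$ by $2$ while preserving the relevant edge-connectivity, thereby cutting down the work to an extremal configuration.

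The core counting argument uses $\lambda$-edge connectivity crucially: every component of $G-(S\cup T)$ sends at least $\lambda$ edges into $S \cup T$, while $r$-regularity caps the total boundary edges by $r\lvert S\rvert + r\lvert T\rvert - 2e_G(S,T)$. This yields
\[
q(S,T) \;\leq\; \frac{r\lvert S\rvert + r\lvert T\rvert - 2e_G(S,T)}{\lambda},
\]
and the numerical hypotheses are calibrated so that $k\lvert S\rvert$ absorbs the $r\lvert S\rvert/\lambda$ part (hence $r/\lambda \leq k$) while $(r-k)\lvert T\rvert$ absorbs the $r\lvert T\rvert/\lambda$ part (hence $k \leq r(1-1/\lambda)$). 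The near-symmetry of the interval $r/\lambda \leq k \leq r - r/\lambda$ reflects the duality that a $k$-factor in $G$ exists iff an $(r-k)$-factor exists.

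The main obstacle will be the parity bookkeeping in case (3), where both $r$ and $k$ are odd. There the definition of $q(S,T)$ involves the parity of $e_G(T,C) + k\lvert C\rvert$, so I will have to decompose the components of $G-(S\cup T)$ by order parity, apply the handshake lemma within each to pin down $e_G(T,C)\pmod{2}$ from the vertex degrees, and only then invoke the edge-connectivity bound on ``bad'' components. Cases (1) and (2) should follow from the same framework with easier parity accounting: case (1) needs the explicit hypothesis that $\lvert V(G)\rvert$ is even in order to get the basic divisibility condition $k\lvert V(G)\rvert \equiv 0 \pmod{2}$, which is automatic in case (3) because odd $r$ already forces $\lvert V(G)\rvert$ even; and case (2) is cleanest of all because $k$ even eliminates much of the parity obstruction in $q(S,T)$, while its nominal lower bound $2 \leq k$ is simply evenness together with $k \geq 1$.
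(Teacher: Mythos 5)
First, note that the paper gives no proof of Theorem~\ref{regularfactors}: it is quoted from Gallai \cite{Gallai} (see also \cite{Akiyama&Kano}) as a known result, so there is nothing in the paper to compare your argument against and the proposal must stand on its own. Your framework is the right one --- modern proofs of Gallai's theorem do run through Tutte's $f$-factor theorem with $f\equiv k$ together with a count of the components of $G-(S\cup T)$ driven by $\lambda$-edge-connectivity --- but the counting as you describe it does not close. Substituting your bound $q(S,T)\leq \frac{1}{\lambda}\bigl(r|S|+r|T|-2e_G(S,T)\bigr)$ into the deficiency expression gives
\[
\delta(S,T)\;\geq\;\Bigl(k-\tfrac{r}{\lambda}\Bigr)|S|+\Bigl(r-k-\tfrac{r}{\lambda}\Bigr)|T|+\Bigl(\tfrac{2}{\lambda}-1\Bigr)e_G(S,T),
\]
and the last coefficient is negative for every $\lambda\geq 3$, so the ``absorption'' you describe establishes nothing beyond $\lambda\leq 2$. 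The genuine proof must split the bad components according to whether they send edges to $T$ or only to $S$, and exploit the parity condition $e_G(T,C)+k|C|\equiv 1\pmod 2$ to charge the components of the first kind against the $-e_G(S,T)$ and $(r-k)|T|$ terms more efficiently. That is exactly the ``parity bookkeeping'' you explicitly defer; it is not a routine verification but the step in which the three hypotheses (1)--(3) are actually consumed, so what you have is a plan rather than a proof.

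A second, independent problem is the preliminary reduction via Theorem~\ref{Petersen}: deleting a $2$-factor from a $\lambda$-edge-connected graph need not preserve any edge-connectivity at all, since a $2$-factor meets an edge cut in an arbitrary even number of edges, possibly all of them (for instance, a Hamiltonian cycle of a $4$-regular graph whose two halves are joined by exactly two edges must use both joining edges, and its removal disconnects the graph). Because the hypotheses couple $k$, $r$, and $\lambda$, lowering $r$ by $2$ without controlling $\lambda$ changes the statement being proved, so this step should be dropped. Your closing parity remarks --- that $|G|$ even is needed in case (1) and automatic in cases (2) and (3), and that $2\leq k$ in case (2) is just evenness --- are correct, but they only dispose of the trivial pair $S=T=\emptyset$.
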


Let $A$ be a non-trivial Abelian group written additively. A finite simple graph $G=(V(G),E(G))$ is said to be \textit{$A$-magic} if there exists an edge labeling $\ell:E(G) \rightarrow A \setminus \{0\}$ such that the induced vertex labeling $\ell^+:V(G) \rightarrow A$, defined by $\ell^+(v) = \sum_{uv\in E(G)} \ell(uv)$, is a constant map. If $c \in A$ and $\ell^+(v) = c$ for all $v \in V(G)$, then we call $c$ is a \textit{magic sum} of $G$.

Let $k$ be a positive integer. If $G$ is $\mathbb{Z}_k$-magic graph, then we say that $G$ is \textit{$k$-magic}. Here, $\mathbb{Z}_1=\mathbb{Z}$ the group of integers, and $\mathbb{Z}_k=\{0,1,2,\ldots,k-1\}$ the group of integers modulo $k \geq 2$. In particular, if $G$ is $k$-magic with magic sum $c$, then we say that $G$ is \textit{$c$-sum $k$-magic}. If $G$ is $c$-sum $k$-magic for all $c \in \mathbb{Z}_k$, then it is said to be \textit{completely $k$-magic}. The set of all magic sums $c \in \mathbb{Z}_k$ of $G$ is the \textit{sum spectrum of $G$ with respect to $k$} and is denoted by $\Sigma_k(G)$. If $c=0$, then we say that $G$ is \textit{zero-sum $k$-magic}. The \textit{null set} of $G$, denoted by $N(G)$, is the set of all positive integers $k$ such that $G$ is a zero-sum $k$-magic graph.

\begin{remark}\label{l'}
If $c \in \mathbb{Z}_k$ and $\ell$ is a $c$-sum $k$-magic labeling of $G$, then the labeling $\ell'$, defined by $\ell'(e) = k - \ell(e)$, is a $(k-c)$-sum $k$-magic labeling of $G$.
\end{remark}

\begin{remark}\label{no_completely-2-magic}
Any $2$-magic graph is not completely $2$-magic.
\end{remark}

The concept of $A$-magic graphs is due to Sedlacek \cite{Sedlacek}. Over the years, many papers have been published in connection with magic graphs. To name a few, Akbari, Rahmati, and Zare \cite{Akbari&Rahmati&Zare} and Choi, Georges, and Mauro \cite{Choi&Georges&Mauro} investigated the zero-sum $k$-magic labelings and null sets of regular graphs. Dong and Wang \cite{Dong&Wang} solved affirmatively a conjecture posed in \cite{Akbari&Rahmati&Zare} on the existence of a zero-sum $3$-magic labeling of $5$-regular graphs. Salehi \cite{Salehi1} determined the integer-magic spectra of certain classes of cycle-related graphs. Shiu and Low \cite{Shiu&Low2} analyzed the group-magic property for complete $n$-partite graphs and
composition graphs with deleted edges.

Using the term ``index set,'' Wang and Hu \cite{Wang&Hu} initially studied the concept of completely $k$-magic graphs. They gave a partial list of completely $1$-magic regular graphs. Eniego and Garces \cite{Eniego&Garces} completely added the remaining cases in this list. They also presented the sum spectra of some regular graphs that are not completely $k$-magic.

\begin{theorem}[{\cite[Theorem 13]{Akbari&Rahmati&Zare}}]\label{Akbari&Rahmati&ZareTheorem13}
Let $G$ be an $r$-regular graph, where $r \geq 3$ and $r \ne 5$. If $r$ is even, then $N(G)=\mathbb{N}$; otherwise, $\mathbb{N} \setminus \{2,4\} \subseteq N(G)$.
\end{theorem}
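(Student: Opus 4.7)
The plan is to split the argument by the parity of $r$, applying Theorem~\ref{Petersen} in the even case and reducing the odd case to the even one via removal of a $1$-factor. The unifying trick is to label ``most'' factors by the constant $1$ and concentrate the residual adjustment on a single remaining factor.

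For $r$ even with $r \ge 4$, Theorem~\ref{Petersen} decomposes $E(G) = F_1 \sqcup \cdots \sqcup F_{r/2}$ into $2$-factors. I would label $F_2, \ldots, F_{r/2}$ by the constant $1$, contributing $r - 2$ to every vertex sum, and then label the remaining $2$-factor $F_1$ so that each vertex accumulates $t \equiv 2 - r \pmod k$ from its two $F_1$-edges. On each cycle of $F_1$ the equations $x_i + x_{i+1} \equiv t$ force labels to alternate between $a$ and $t - a$; since $r$ is even one checks that $t$ has the same parity class as $0$, so a short analysis produces nonzero labels whenever $k \ge 3$, except in the subcase where $k$ is odd and some cycle of $F_1$ is odd with $t \equiv 0 \pmod k$ (equivalently $k \mid r - 2$). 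In that subcase I would abandon independent labeling of $F_1$, merge it with a second $2$-factor into a $4$-regular subgraph, and build a joint labeling there, exploiting the flexibility afforded by $r/2 \ge 2$. The values $k = 1$ and $k = 2$ succumb to alternating $\pm 1$ and all-ones labelings, respectively.

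For $r$ odd, $r \ge 3$, and $r \ne 5$, note first that $|V(G)|$ is even by the handshake lemma. The strategy is to extract a $1$-factor $M \subseteq E(G)$ (from Theorem~\ref{regularfactors}(3) with $k = 1$ when the edge connectivity satisfies $\lambda \ge r$, and from a separate argument in the remaining low-connectivity cases), fix a nonzero $c \in \mathbb{Z}_k$, label every edge of $M$ by $c$, and then label the $(r-1)$-regular graph $G - M$ so that every vertex accumulates $-c \pmod k$ from $G - M$---a shifted version of the preceding construction. The exclusion of $k = 2$ is genuine, since only the label $1$ is available and every vertex sum would equal the odd integer $r$; the exclusion of $k = 4$ reflects a more delicate mod-$2$ obstruction that the reduction cannot always circumvent.

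The principal obstacle I foresee is the odd-cycle, odd-$k$ subcase of the even-regular reduction, which demands genuine mixing across two $2$-factors rather than the clean per-factor labeling I would like. A secondary hurdle is producing a $1$-factor in the $r$ odd case without a high-connectivity hypothesis, since Theorem~\ref{regularfactors}(3) with $k = 1$ requires $\lambda \ge r$, so a more delicate factor analysis is needed for the remaining graphs. Finally, the avoidance of $r = 5$ mirrors the then-open status of zero-sum $3$-magic labelings on $5$-regular graphs (later settled affirmatively in~\cite{Dong&Wang}), so the construction must be chosen so as not to collapse at $r = 5$.
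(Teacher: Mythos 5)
This statement is quoted from \cite{Akbari&Rahmati&Zare} and the paper supplies no proof of it, so there is nothing internal to compare against; your proposal has to stand on its own, and it has a genuine gap in the odd case. Your entire strategy for odd $r$ is to ``extract a $1$-factor $M$'' and reduce to the even-regular graph $G-M$, but an odd-regular graph need not have a perfect matching at all: the classical $3$-regular graphs in which some vertex is incident only to cut-edges (exactly the graphs mentioned just before Theorem~\ref{odd-regular} as failing to be zero-sum $4$-magic) have no $1$-factor, yet the theorem still asserts, e.g., that they are zero-sum $3$-magic. Theorem~\ref{regularfactors}(3) with $k=1$ only yields a $1$-factor when $\lambda\ge r$, and the ``separate argument in the remaining low-connectivity cases'' that you defer is precisely the substance of the theorem, not a loose end. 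The standard way around this --- and the device this paper itself uses in Remark~\ref{G'}, Theorem~\ref{5-regular}, and Theorem~\ref{odd-regular} --- is to duplicate every edge to form a $2r$-regular multigraph $G'$, apply Petersen's theorem to $G'$ to obtain even factors (which always exist, no matching hypothesis needed), label those factors, and pull the labeling back to $G$ via $\ell(e_i)=\ell'(e_i)+\ell'(e_i')$ or $\tfrac12[\ell'(e_i)+\ell'(e_i')]$. Without something of this kind your odd case does not go through.

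The even case is essentially sound but has one unfinished spot: when $k\ge 3$ is odd, $k\mid r-2$, and $F_1$ contains an odd cycle, you propose to ``merge $F_1$ with a second $2$-factor and build a joint labeling,'' which is not a construction. Note that this subcase has an easier repair within your own framework: label one of the other $2$-factors $F_2$ with a nonzero constant $b$ instead of $1$, so that the required per-vertex contribution from $F_1$ becomes $t=4-r-2b$; since $k$ is odd, $b\mapsto 2b$ is a bijection of $\mathbb{Z}_k$, so among the $k-1\ge 2$ nonzero choices of $b$ at least one makes $t\not\equiv 0\pmod k$, and your alternating/constant cycle labeling then applies. With that patch the even half of your argument is complete; the odd half needs the multigraph-doubling idea (or an equivalent substitute) before it can be called a proof.
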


\begin{theorem}[{\cite[Theorem 2.1]{Dong&Wang}}]\label{Dong&Wang}
Every $5$-regular graph admits a zero-sum $3$-magic labeling.
\end{theorem}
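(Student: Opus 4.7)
The plan is to translate the zero-sum $3$-magic condition into a purely combinatorial statement about subgraphs. Since labels lie in $\mathbb{Z}_3 \setminus \{0\} = \{1,2\}$, at any vertex $v$ of the $5$-regular graph $G$, if $b_v$ denotes the number of incident edges labeled $2$, then $\sum_{u \in N(v)} \ell(uv) = (5 - b_v) + 2 b_v = 5 + b_v$, which is $\equiv 0 \pmod{3}$ exactly when $b_v \in \{1,4\}$. Hence Theorem~\ref{Dong&Wang} is equivalent to showing that every $5$-regular graph contains a spanning subgraph $F$ (the edges labeled $2$) in which every vertex has degree $1$ or $4$.

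The easy sub-case is when $G$ has a perfect matching $M$: take $F = M$, so that every vertex has $F$-degree $1$, and the labeling ``$2$ on $M$, $1$ elsewhere'' gives sum $4 + 2 = 6 \equiv 0 \pmod{3}$ at every vertex. A $5$-regular graph has even order by the handshake lemma, so this route is parity-consistent. If $G$ is $5$-edge-connected, Theorem~\ref{regularfactors}(2) with $r=5$, $k=4$ supplies a $4$-factor whose complement is the required perfect matching, handling this well-connected case outright.

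For $G$ that is only $2$-edge-connected, I would fall back on Theorem~\ref{regularfactors}(3) with $r=5$, $k=3$ to obtain a $3$-factor $H$, whose complement $C = G \setminus H$ is a $2$-factor. Labeling all $H$-edges with $1$ contributes $3 \equiv 0 \pmod{3}$ at every vertex, reducing the problem to labeling $C$ so that the two $C$-edges at each vertex sum to $0 \pmod{3}$; the only way is to alternate $1$ and $2$ around each cycle of $C$, which works exactly when every cycle of $C$ is even. Odd cycles in $C$ would be ``broken'' by a local edge exchange between $C$ and $H$ along an $H$-chord or an $H$-path joining two odd cycles, using $2$-edge-connectivity to carry out the reroute while preserving the $(2,3)$-factor structure.

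The principal obstacle is edge-connectivity $1$, where Theorem~\ref{regularfactors} does not apply. If $uv$ is a bridge of $G$, then $G - uv$ splits into two odd-order components $G_1 \ni u$ and $G_2 \ni v$, each containing exactly one vertex (the bridge endpoint) of degree $4$ and all others of degree $5$. On each side I need a spanning subgraph with $F$-degree in $\{1,4\}$ at interior vertices and, at the bridge endpoint, in $\{1,4\}$ if $uv \notin F$ or in $\{0,3\}$ if $uv \in F$. The two local choices must be compatible with the single global decision about $uv$. To secure existence on each side I would invoke Lov\'asz's $(g,f)$-factor theorem applied to $G_i$ with appropriate degree bounds, and then show that at least one pair of local choices is consistent across the bridge. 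Propagating this gluing through an iterated bridge-decomposition of $G$ is where I expect the main difficulty.
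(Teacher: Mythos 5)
This statement is not proved in the paper at all: it is imported verbatim as \cite[Theorem 2.1]{Dong&Wang}, the resolution of a conjecture left open in \cite{Akbari&Rahmati&Zare}, so there is no internal proof to compare against and your attempt must stand on its own. Your opening reduction is correct and is the standard reformulation: with labels in $\{1,2\}$ the vertex sum is $5+b_v$, so a zero-sum $3$-magic labeling is equivalent to a spanning subgraph $F$ with $d_F(v)\in\{1,4\}$ for all $v$. The perfect-matching case and the $5$-edge-connected case (via Theorem~\ref{regularfactors}(2)) are fine, and your parity computation that a bridge splits $G$ into two odd-order components is also correct.

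However, the proof has genuine gaps exactly where the real content of the theorem lives. First, in the $2$-edge-connected case, after extracting a $3$-factor $H$ you need the complementary $2$-factor to consist only of even cycles, and when it does not, the ``local edge exchange along an $H$-chord or $H$-path'' is only an intention, not an argument: swapping edges between $H$ and $C$ changes degrees and there is no verification that the $(2,3)$-factor structure, let alone the degree condition $d_F(v)\in\{1,4\}$, can always be restored. Second, the bridge case is not handled: you defer to ``Lov\'asz's $(g,f)$-factor theorem with appropriate degree bounds,'' but that theorem governs interval constraints $g(v)\le d_F(v)\le f(v)$, whereas your required degree set $\{1,4\}$ has a gap of two consecutive missing values ($2$ and $3$); this is precisely the regime in which degree-prescribed subgraph problems are not covered by $(g,f)$-factor theory, and it is why the $r=5$, $k=3$ case resisted the general argument of \cite{Akbari&Rahmati&Zare} and required a dedicated proof by Dong and Wang. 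The iterated gluing across a bridge decomposition is likewise asserted rather than carried out. As written, the proposal establishes the theorem only for $5$-regular graphs with a perfect matching (equivalently, in the cases your factor theorems reach) and leaves the hard cases open.
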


\begin{theorem}[{\cite[Theorem 3.3]{Eniego&Garces}}]\label{c_n}
Let $n \geq 3$ and $k \ge 3$ be integers, and $C_n$ the cycle with $n$ vertices.
\begin{enumerate}
\item[{\rm (1)}] If $n$ is even, then $C_n$ is completely $k$-magic for all $k$.
\item[{\rm (2)}] If $n$ is odd, then $C_n$ is not completely $k$-magic for any $k$. Moreover, we have
    $$\Sigma_k(C_n) =
    \begin{cases}
    \mathbb{Z}_k \setminus \{0\} & \text{if $k$ is odd,} \\
    \{0, 2, \ldots, k-2 \} & \text{if $k$ is even}.
    \end{cases}$$
\end{enumerate}
\end{theorem}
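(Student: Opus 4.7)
The plan is to reduce the magic condition on a cycle to a single algebraic constraint. Label the vertices of $C_n$ as $v_0, \ldots, v_{n-1}$ cyclically with edges $e_i = v_i v_{i+1}$ (indices mod $n$). A labeling $\ell : E(C_n) \to \mathbb{Z}_k \setminus \{0\}$ is $c$-sum $k$-magic iff $\ell(e_{i-1}) + \ell(e_i) \equiv c \pmod{k}$ at each vertex $v_i$. Subtracting the condition at $v_i$ from the one at $v_{i+1}$ gives the key structural identity $\ell(e_{i-1}) \equiv \ell(e_{i+1}) \pmod{k}$, and the whole proof will be driven by this observation.

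For part (1), with $n$ even, the parity relation above splits the edges into two independent classes (even-indexed and odd-indexed) on which $\ell$ is forced to be constant, say with values $a$ and $b$ respectively. Conversely, any pair $a, b \in \mathbb{Z}_k \setminus \{0\}$ with $a + b \equiv c \pmod{k}$ produces a valid $c$-sum labeling. To show such a pair exists for every $c \in \mathbb{Z}_k$, I would just count: for each $a \in \mathbb{Z}_k \setminus \{0, c\}$ the value $b = c - a$ lies in $\mathbb{Z}_k \setminus \{0\}$, and the set of admissible $a$ has size at least $k - 2 \geq 1$ because $k \geq 3$.

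For part (2), with $n$ odd, iterating $\ell(e_{i-1}) \equiv \ell(e_{i+1})$ around the cycle runs through every edge index, since $\gcd(n, 2) = 1$, forcing $\ell$ to be constant with some value $a \in \mathbb{Z}_k \setminus \{0\}$. The magic condition collapses to the single congruence $2a \equiv c \pmod{k}$. Split on the parity of $k$: if $k$ is odd then $2$ is invertible, so $a \equiv 2^{-1} c$ solves it and is nonzero iff $c \neq 0$, giving $\Sigma_k(C_n) = \mathbb{Z}_k \setminus \{0\}$; if $k$ is even then the image of $a \mapsto 2a$ is $\{0, 2, \ldots, k-2\}$, and every such $c$ is realized by a nonzero $a$ (use $a = k/2$ when $c = 0$ and $a = c/2$ otherwise), yielding $\Sigma_k(C_n) = \{0, 2, \ldots, k-2\}$. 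In both subcases $\Sigma_k(C_n) \subsetneq \mathbb{Z}_k$, so $C_n$ fails to be completely $k$-magic.

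There is no real obstacle; the argument is elementary once the structural identity is in hand. The only bookkeeping subtlety is maintaining the constraint that labels are nonzero modulo $k$, which is tightest in the small case $k = 3$ where only two nonzero residues are available, but the counting in part (1) and the explicit choices of $a$ in part (2) both handle this uniformly for all $k \geq 3$.
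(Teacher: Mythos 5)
Your argument is correct: the telescoping identity $\ell(e_{i-1}) \equiv \ell(e_{i+1}) \pmod{k}$ reduces everything to the two-value (even $n$) or one-value (odd $n$) analysis, and your handling of the nonzero-label constraint is sound for all $k \geq 3$. Note, however, that this paper does not prove Theorem~\ref{c_n} at all --- it is imported verbatim from \cite[Theorem~3.3]{Eniego&Garces} --- so there is no in-paper proof to compare against; your elementary derivation is the natural self-contained one and can stand on its own.
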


\begin{theorem}[{\cite[Lemma 3.4]{Eniego&Garces}}]\label{odd order lemma}
Let $k \ge 4$ be an even integer. Then there exists no $k$-magic graph of odd order that is completely $k$-magic. In particular, if $c$ is a magic sum of a $k$-magic graph of odd order, then $c$ must be even.
\end{theorem}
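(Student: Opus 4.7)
The plan is to prove the ``in particular'' statement directly by a handshake-style parity argument, and then deduce the first assertion from it.

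First I would fix a $k$-magic graph $G$ of odd order $n$ with magic sum $c$ and labeling $\ell : E(G) \to \mathbb{Z}_k \setminus \{0\}$. Summing the defining congruence $\sum_{u \in N(v)} \ell(uv) \equiv c \pmod{k}$ over all $v \in V(G)$, each edge contributes its label exactly twice, giving the identity
\[
2 \sum_{e \in E(G)} \ell(e) \equiv nc \pmod{k}.
\]
Viewed as an equation of integers, this says $k$ divides $2\sum_e \ell(e) - nc$. Since $k$ is even, the difference $2\sum_e \ell(e) - nc$ is even; and since $2\sum_e \ell(e)$ is visibly even, $nc$ must be even. Because $n$ is odd by hypothesis, $c$ itself must be even. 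This proves the ``in particular'' clause.

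For the main statement, I would then note that since $k \ge 4$, the group $\mathbb{Z}_k$ contains odd elements, e.g.\ $c = 1$. By the clause just proved, no $k$-magic graph of odd order can have $1$ as a magic sum, so such a graph cannot be $1$-sum $k$-magic, hence cannot be completely $k$-magic.

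There is no real obstacle here: the argument is a one-line parity computation once one writes down the correct congruence. The only thing to be a little careful about is the distinction between the congruence and the underlying integer identity, since the conclusion ``$c$ is even'' is a statement about the representative of $c$ in $\{0,1,\dots,k-1\}$, and it is the evenness of $k$ together with evenness of $2\sum_e \ell(e)$ that transfers the parity of $nc$ from $\mathbb{Z}$ to $\mathbb{Z}_k$.
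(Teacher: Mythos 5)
Your proof is correct: the handshake-style summation giving $2\sum_{e}\ell(e)\equiv nc \pmod{k}$, combined with the evenness of $k$ to transfer parity, is exactly the standard argument for this lemma, and your deduction of the main claim from the case $c=1$ is sound. Note that the paper itself only cites this result from an earlier reference without reproducing a proof, so there is no in-paper argument to diverge from; yours is the natural (and essentially the only) route.
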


\begin{theorem}[{\cite[Theorem 3.6]{Eniego&Garces}}]\label{TheoremAfterLemmaFromDummit&Foote}
Let $k,r \geq 3$ be integers, and $G$ an $r$-regular graph. If $\gcd(r,k) = 1$, then $\{1, 2, \ldots, k-1\} \subseteq \Sigma_k(G)$.
\end{theorem}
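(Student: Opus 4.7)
The plan is to try the simplest labeling imaginable, namely a constant one. If every edge of $G$ is labeled by the same value $a \in \mathbb{Z}_k$, then because $|N(v)| = r$ for every vertex $v$, the induced vertex-sum at each $v$ is exactly $r a \pmod k$. So the question reduces to a congruence in $\mathbb{Z}_k$: given a target $c$, can I solve $r a \equiv c \pmod k$ with $a \not\equiv 0$?

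The hypothesis $\gcd(r,k) = 1$ says that $r$ is a unit in $\mathbb{Z}_k$, so I can set $a := c \cdot r^{-1} \pmod k$. The only thing left to check is the side condition that the labeling uses values in $\mathbb{Z}_k \setminus \{0\}$, i.e., that this $a$ is nonzero. If instead $a \equiv 0 \pmod k$, then $c \equiv r a \equiv 0 \pmod k$, contradicting $c \in \{1,2,\ldots,k-1\}$. Hence for every such $c$ the constant labeling $\ell \equiv c r^{-1}$ is a valid $c$-sum $k$-magic labeling, and $\{1,2,\ldots,k-1\} \subseteq \Sigma_k(G)$.

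I do not anticipate any genuine obstacle: the coprimality hypothesis collapses the problem to inverting $r$ modulo $k$, and the construction is independent of any structural feature of $G$ beyond $r$-regularity. The only mild subtlety is remembering that edge labels are forbidden from being $0$, which is handled by the one-line contradiction above.
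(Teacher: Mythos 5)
Your proof is correct: since $\gcd(r,k)=1$ makes $r$ a unit in $\mathbb{Z}_k$, the constant labeling $\ell \equiv c r^{-1}$ is nonzero for each $c \in \{1,\ldots,k-1\}$ and yields vertex sums $r\cdot cr^{-1} \equiv c \pmod{k}$. The paper imports this theorem from \cite{Eniego&Garces} without reproducing the proof, but the internal label (referring to a lemma from Dummit and Foote on generators of $\mathbb{Z}_k$) indicates the original argument is essentially the same one you give, so no further comparison is needed.
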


\begin{theorem}[{\cite[Theorem 3.7]{Eniego&Garces}}]\label{Magic&PerfectMatching}
	Let $G$ be a zero-sum $k$-magic $r$-regular graph, where $k \ge 3$ and $r\geq 3$. If $G$ has a $1$-factor, then $G$ is completely $k$-magic.
\end{theorem}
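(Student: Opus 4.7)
The plan is to use the given zero-sum $k$-magic labeling $\ell_0$ of $G$ together with the $1$-factor $F$ to produce, for each $c \in \mathbb{Z}_k$, a $c$-sum $k$-magic labeling. The case $c = 0$ is handled by $\ell_0$ itself, so I focus on $c \in \mathbb{Z}_k \setminus \{0\}$.

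The natural candidate is the shifted labeling
\[
\ell(e) = \ell_0(e) + c \cdot \mathbf{1}_{e \in F} \pmod{k}.
\]
Because every vertex meets $F$ in exactly one edge, each vertex sum under $\ell$ equals $0 + c = c$, so the magic-sum condition holds automatically. Since $\ell_0$ is nowhere zero, $\ell$ can take the value $0$ only on a matching edge $e_0 \in F$ satisfying $\ell_0(e_0) \equiv -c \pmod{k}$. If no such ``bad'' matching edge exists, $\ell$ is already the desired $c$-sum $k$-magic labeling.

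When bad matching edges are present, I would first replace $\ell_0$ by another zero-sum $k$-magic labeling $\widetilde{\ell}_0$ satisfying $\widetilde{\ell}_0(e) \neq -c$ for every $e \in F$, and then apply the same shift to $\widetilde{\ell}_0$. Adding any function $\alpha : E(G) \to \mathbb{Z}_k$ whose induced vertex sums vanish (an element of the cycle space of $G$ over $\mathbb{Z}_k$) preserves the zero-sum property, so I look for $\widetilde{\ell}_0 = \ell_0 + \alpha$ in which $\alpha$ is an alternating $\pm \mu$ pattern along an even closed walk passing through each bad matching edge. The hypothesis $r \geq 3$ supplies enough non-matching edges at every endpoint to route such walks, while $k \geq 3$ gives at least two admissible values for $\mu \in \mathbb{Z}_k \setminus \{0\}$, enough to avoid creating new zero labels or new bad matching edges.

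The main obstacle is to make this correction uniform: one must guarantee the existence of a suitable even closed walk through each bad $e_0$ (splitting into cases according to whether $G$ contains a short even cycle through $e_0$, or whether a longer walk must be used when $G$ has large girth or is bipartite), and then choose $\mu$ so that every modified label of $\widetilde{\ell}_0$ remains nonzero and no matching edge outside $e_0$ inherits the forbidden value $-c$. In other words, one must exhibit an element of the cycle space of $G$ over $\mathbb{Z}_k$ that is nowhere equal to $-\ell_0(e)$ on $E(G)$ and nowhere equal to $-\ell_0(e) - c$ on $F$; establishing this global consistency is the technical crux of the argument.
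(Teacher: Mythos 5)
You have correctly identified the one genuine obstruction in the naive shift $\ell_0 \mapsto \ell_0 + c\cdot\mathbf{1}_F$, namely that a matching edge with $\ell_0(e)\equiv -c$ would receive the forbidden label $0$. But your proposed repair is not a proof: you explicitly defer ``the technical crux'' (existence of a cycle-space element $\alpha$ avoiding, on every edge, the value $-\ell_0(e)$ and, on every matching edge, also $-\ell_0(e)-c$), and as stated this step is in real danger of failing. For $k=3$ a non-bad matching edge lying on your walk has \emph{two} forbidden values out of the three elements of $\mathbb{Z}_3$, so the value of $\alpha$ there is forced, and these forced values need not be compatible with an alternating $\pm\mu$ pattern. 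Worse, an edge of $F$ may be a cut-edge of $G$ (the paper itself discusses odd-regular graphs with cut-edges), in which case every closed walk traverses it an even number of times and the alternating contributions cancel, so no correction of the proposed form can touch that edge at all. So the argument has a gap precisely at the step you flagged, and it is not clear the strategy can be pushed through.

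The deeper issue is that the correction is unnecessary: the zero-sum labeling $\ell_0$ should be used \emph{only} for $c=0$ and then discarded. For $c\in\mathbb{Z}_k^*$ with $c\not\equiv r-1\pmod{k}$, label every edge of $G\setminus F$ with $1$ and every edge of $F$ with $c-(r-1)\pmod{k}$; each vertex meets $r-1$ edges of $G\setminus F$ and one edge of $F$, so its sum is $c$, and the single label on $F$ is nonzero by the restriction on $c$. If $c_0\equiv r-1\pmod k$ is nonzero and $2c_0\not\equiv 0$, then $k-c_0\notin\{0,c_0\}$ is already realized, and Remark \ref{l'} yields $c_0\in\Sigma_k(G)$. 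If $c_0=\tfrac{k}{2}$, label $F$ with $\tfrac{k}{2}$ and $G\setminus F$ with $2$ (nonzero since $k\ge 3$): the vertex sum is $\tfrac{k}{2}+2(r-1)\equiv\tfrac{k}{2}$. This is exactly the mechanism of Theorem \ref{factor2} in the paper (constant labels on the complement of a factor, one adjustable label on the factor), and it sidesteps the zero-label problem entirely rather than repairing it.
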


\begin{theorem}[{\cite[Theorem 13]{Wang&Hu}, \cite[Theorem 2.1]{Eniego&Garces}}]\label{Wang&Hu}
Let $G$ be an $r$-regular graph of order $n$. Then
$$\Sigma_1(G) =
\begin{cases}
\mathbb{Z}\setminus\{0\} & \text{if $r = 1$,}    \\
\mathbb{Z} & \text{if $r = 2$ and $G$ contains even cycles only,}    \\
2\mathbb{Z}\setminus\{0\} & \text{if $r = 2$ and $G$ contains an odd cycle,}    \\
2\mathbb{Z} & \text{if $r \geq 3$, $r$ even, and $n$  odd,}    \\
\mathbb{Z} & \text{if $r \geq 3$ and $n$ even,}
\end{cases}$$
where $2\mathbb{Z}$ is the set of all even integers.
\end{theorem}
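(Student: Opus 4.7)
The plan is a case analysis on the regularity $r$ and the parity of the order $n$, proceeding from the trivial regimes to the hardest one.

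For $r = 1$, $G$ is a disjoint union of edges, so the label on any edge is exactly the magic sum at its two endpoints, yielding $\Sigma_1(G) = \mathbb{Z} \setminus \{0\}$.  For $r = 2$, $G$ is a disjoint union of cycles.  On a cycle $v_1 v_2 \cdots v_m$ the local relations $\ell(v_i v_{i+1}) + \ell(v_{i+1} v_{i+2}) = c$ propagate as ``labels alternate between $a$ and $c-a$''.  On an even cycle this just needs $a \notin \{0, c\}$, which is possible for every $c \in \mathbb{Z}$; on an odd cycle the alternation wraps around and forces every label equal to $c/2$, so $c$ must be even and nonzero.  Combining over components gives the two stated subcases.

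For $r \geq 3$ even and $n$ odd, the forward direction is the handshake identity $nc = 2 \sum_e \ell(e)$, obtained by summing the vertex condition over $V(G)$: since $n$ is odd, $c$ must be even.  Conversely, Theorem \ref{Petersen} yields a 2-factorization $G = H_1 \cup \cdots \cup H_{r/2}$ with $r/2 \geq 2$; giving $H_i$ a constant label $a_i \neq 0$ induces vertex sum $2\sum_i a_i$, and any even integer is a sum of $r/2 \geq 2$ nonzero integers, so every even $c$ is realized.

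For $r \geq 3$ with $n$ even one needs $\Sigma_1(G) = \mathbb{Z}$.  The even values of $c$ follow by constant labels on a 2-factorization (when $r$ is even) or on a 1-factorization when one exists.  To capture odd $c$ we introduce a parity-breaking component: either a 2-factor of $G$ that contains an even cycle, whose labels can be tuned as alternating $a, c'-a$, or a 1-factor $F$ with an odd constant label while $G - F$ absorbs the even remainder via the already-settled cases.

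The principal obstacle is guaranteeing these substructures for every $r$-regular graph with $r \geq 3$ and $n$ even: not every cubic graph with bridges has a 1-factor, and a 2-factorization need not contain a 2-factor with an even cycle.  The likely remedy is either to invoke bridge-sensitive strengthenings of Theorem \ref{regularfactors} or to argue linear-algebraically over the cycle space, starting from a rational constant-sum labeling ($\ell \equiv c/r$) and then adjusting along even cycles of $G$ to produce an integer labeling that avoids the forbidden value $0$.
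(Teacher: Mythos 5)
First, note that the paper does not prove this theorem at all: it is quoted verbatim from the cited sources (Theorem 13 of Wang and Hu, Theorem 2.1 of Eniego and Garces), so there is no in-paper argument to compare yours against. Judged on its own, your proposal handles the cases $r=1$, $r=2$, and ($r\ge 3$ even, $n$ odd) correctly and completely: the alternating-label analysis on cycles and the handshake parity obstruction $nc = 2\sum_e \ell(e)$ together with a $2$-factorization are exactly the right tools, and your observation that any integer is a sum of $r/2\ge 2$ nonzero integers closes those cases.

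The case $r\ge 3$ with $n$ even, however, is left genuinely open, and you say so yourself. The statement that $\Sigma_1(G)=\mathbb{Z}$ there requires producing \emph{odd} vertex sums (and, for $r=3$, even the value $0$), and neither of your two candidate substructures is guaranteed to exist: a bridgeless hypothesis is not available, so a $1$-factor may fail (e.g., cubic graphs with cut-edges), and a $2$-factorization of an even-regular graph can consist entirely of odd cycles, in which case no component can be ``tuned'' to an odd sum. Your fallback suggestions do not repair this. Theorem \ref{regularfactors} as stated needs edge-connectivity bounds that an arbitrary $r$-regular graph need not satisfy, and the ``rational labeling $\ell\equiv c/r$ adjusted along even cycles'' idea fails when the cycle space has no even cycles to adjust along, besides the separate problem of steering all labels away from $0$. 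The standard ways to close this case are structural facts you have not invoked: for instance, that every connected graph of even order has a spanning subgraph in which every vertex has odd degree (which lets you shift the vertex sum by an odd amount), or the doubled-multigraph construction of Remark \ref{G'} combined with the ``moreover'' clause of Theorem \ref{Petersen} (a $2r$-regular connected multigraph of even order is $r$-factorable). Without one of these inputs the proof of the fifth case is incomplete, and since that is the case carrying most of the content of the theorem, the proposal as written does not establish the statement.
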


With Remark \ref{no_completely-2-magic} and Theorem \ref{Wang&Hu}, it remains to characterize all completely $k$-magic regular graphs for $k\geq 3$. This characterization is the main theorem of this paper, which we state as follows.

\begin{theorem}[{\bf Main Theorem}]
Let $r \geq 2$ and $k \geq 3$ be integers, and $G$ an $r$-regular graph of order $n \geq 3$. Then $G$ is completely $k$-magic if and only if one of the following properties holds:
\begin{enumerate}
\item[{\rm (1)}] $k \geq 3$, $r=2$, and $G$ contains even cycles only,
\item[{\rm (2)}] $k \geq 5$ and $r \geq 3$ odd,
\item[{\rm (3)}] $k \geq 5$, $r \geq 4$ even, and $n$ even,
\item[{\rm (4)}] $k \geq 5$ odd, $r \geq 4$ even, and $n$ odd,
\item[{\rm (5)}] $k=4$, $r \geq 3$, $n$ even, and $G$ zero-sum $4$-magic, or
\item[{\rm (6)}] $k = 3$ and any one of the following conditions holds:
\begin{enumerate}
\item[{\rm (i)}] $r \not\equiv 0 \pmod{3}$,
\item[{\rm (ii)}] $r \equiv 0 \pmod{6}$, or
\item[{\rm (iii)}] $r \equiv 0 \pmod{3}$, $r$ odd, and $G$ has a factor $H$ such that $d_H(v) \equiv 1 \pmod{3}$ for all $v \in V(H)$.
\end{enumerate}
\end{enumerate}
\end{theorem}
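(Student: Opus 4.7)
The proof splits into two directions.

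\emph{Necessity.} For $r=2$, Theorem~\ref{c_n}(2) immediately forces the cycle components of $G$ to all be even, giving~(1). For $r\ge 3$, Remark~\ref{no_completely-2-magic} eliminates $k=2$, while Theorem~\ref{odd order lemma} eliminates any even $k\ge 4$ paired with $n$ odd; combined with the handshake identity (which forces $n$ even when $r$ is odd), this narrows the allowable configurations to those listed in~(2)--(6). For $k=4$, completely $4$-magic trivially supplies the zero-sum labeling demanded in~(5). For $k=3$ with $r\equiv 0\pmod 3$ and $r$ odd, starting from any $2$-sum labeling $\ell$ and setting $H=\ell^{-1}(1)$, the vertex identity $d_H(v)+2(r-d_H(v))=2r-d_H(v)\equiv 2\pmod 3$ forces $d_H(v)\equiv 1\pmod 3$, yielding the factor required in~(6)(iii).

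\emph{Sufficiency.} Case~(1) follows from Theorem~\ref{c_n}(1) applied componentwise. For cases~(2)--(4), Remark~\ref{l'} reduces the task to producing a $c$-sum labeling for each $c\in\{0,1,\ldots,\lfloor k/2\rfloor\}$; a zero-sum labeling is available from Theorem~\ref{Akbari&Rahmati&ZareTheorem13}, with the excluded case $r=5$ absorbed via Theorem~\ref{Dong&Wang} and analogous constructions. When $\gcd(r,k)=1$, Theorem~\ref{TheoremAfterLemmaFromDummit&Foote} supplies every nonzero sum. When $\gcd(r,k)>1$, the strategy is to extract a suitable $h$-factor $H$ of $G$ (using Petersen's Theorem~\ref{Petersen} when $r$ is even, or Theorem~\ref{regularfactors} when $r$ is odd) and to assign constant labels $a$ on $H$ and $b$ on $G-H$; the vertex sum becomes $ha+(r-h)b\pmod k$, and varying $h$ together with $(a,b)\in(\mathbb{Z}_k\setminus\{0\})^2$ realizes every residue class in $\mathbb{Z}_k$.

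For case~(5), the hypothesis gives a zero-sum $4$-magic labeling $\ell_0$; modifying $\ell_0$ on a $1$-factor $F$ via $\ell'(e)=\ell_0(e)+c$ yields a $c$-sum labeling, where $F$ comes from Theorem~\ref{regularfactors} when $r$ is odd, and from a $2$-factorization (Theorem~\ref{Petersen}) with a subsequent even-cycle refinement when $r$ is even (using the even parity of $n$). For case~(6), zero-sum $3$-magic follows from Theorems~\ref{Akbari&Rahmati&ZareTheorem13} and~\ref{Dong&Wang}; by Remark~\ref{l'} only a $1$-sum labeling remains to be built. In~(i), the constant labeling $\ell\equiv 1$ already gives sum $r\not\equiv 0\pmod 3$. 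In~(ii), writing $r=6m$, Theorem~\ref{Petersen} decomposes $G$ into $3m$ edge-disjoint $2$-factors; labeling $i$ of them by $2$ and the remaining $3m-i$ by $1$ yields vertex sum $2i\pmod 3$, so choosing $i\equiv 2\pmod 3$ gives sum $1$. In~(iii), labeling $H$ by $1$ and $G-H$ by $2$ gives sum $2r-d_H(v)\equiv 2\pmod 3$, which Remark~\ref{l'} converts into a $1$-sum labeling.

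The \textbf{main obstacle} is the $\gcd(r,k)>1$ subcase of~(2)--(4): neither the zero-sum labeling alone nor a uniform constant shift covers $\mathbb{Z}_k$, so one must select a factor $H$ of appropriate degree $h$ together with labels $(a,b)$ such that the map $(a,b)\mapsto ha+(r-h)b$ hits every target $c$ while keeping $a,b\not\equiv 0\pmod k$. Guaranteeing the existence of such an $h$-factor without additional edge-connectivity hypotheses, by drawing on the full strength of Theorems~\ref{Petersen} and~\ref{regularfactors}, is the most delicate ingredient, and the bookkeeping required to handle all parity combinations of $r$, $k$, and (when $r$ is even) $n$ uniformly is what makes the argument lengthy.
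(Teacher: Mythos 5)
Your necessity direction is essentially sound and matches the paper (Theorem~\ref{c_n} for $r=2$, Lemma~\ref{odd order lemma} for even $k$ with $n$ odd, and the counting argument extracting the factor $H$ from a nonzero-sum $3$-magic labeling in case (6)(iii)), as are the easy sufficiency cases (1), (6)(i) and (6)(iii). But the core of the sufficiency argument has a genuine gap. For cases (2)--(5) your plan is to extract an $h$-factor of $G$ itself and put constant labels on it and its complement, invoking Theorem~\ref{regularfactors} when $r$ is odd. That theorem carries edge-connectivity hypotheses (e.g.\ $\frac{r}{\lambda}\le k$) that an arbitrary $r$-regular graph need not satisfy: an odd-regular graph can have a vertex all of whose incident edges are cut-edges, in which case it has no $1$-factor and, more generally, no proper regular factor of the kind you need. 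You flag this yourself as ``the most delicate ingredient'' but do not resolve it, and as stated the approach fails. The same problem infects your case (5): the $1$-factor $F$ you shift by $c$ need not exist for odd $r$, and moreover $\ell_0(e)+c$ can vanish mod $4$, producing a forbidden label.

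The paper's way around this is the edge-duplication construction of Remark~\ref{G'}: duplicate every edge of $G$ to obtain a $2r$-regular multigraph $G'$, which by Petersen's Theorem~\ref{Petersen} is unconditionally $2$-factorable and has $2h$-factors for all $1\le h\le r$, with no connectivity hypothesis. One then labels the factors of $G'$ with constants and pushes the labeling down to $G$ via $\ell(e_i)=\ell'(e_i)+\ell'(e_i')$, or $\ell(e_i)=\tfrac12[\ell'(e_i)+\ell'(e_i')]$ when the parities permit halving. This device, together with a fairly intricate case analysis on $\gcd(r,k)$, the parity of $k$, and several exceptional values (such as $k=3b$, $k=8$, and $d\in\{3,9\}$ in the $4$-regular case), is what actually realizes all the nonzero magic sums; none of it appears in your proposal. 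For case (5) the paper likewise avoids $1$-factors entirely, using $\gcd(r,4)=1$ with Theorem~\ref{TheoremAfterLemmaFromDummit&Foote} when $r$ is odd and factorization of $G$ or of $G'$ when $r$ is even. You would need to import the duplication idea (or an equivalent substitute) to close the argument.
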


For convenience, we assume that all graphs to be considered are finite and simple (unless otherwise stated). We also write $\mathbb{Z}_k^*$ to mean $\mathbb{Z}_k \setminus \{0\}$. For terms that are not defined in this paper, see \cite{Bondy&Murty}.

\section{Proof of the Main Theorem}

We divide the proof into several results.

It is not difficult to see that if $G$ is $1$-regular, then $\Sigma_k(G) = \mathbb{Z}_k^*$. For $2$-regular graphs, the following remark is a consequence of Theorem \ref{c_n}.

\begin{remark}\label{rc_n}
Let $k \geq 3$ and $G$ a $2$-regular graph. If $G$ has an odd cycle, then
$$\Sigma_k(G) =
\begin{cases}
\mathbb{Z}_k^* & \text{if $k$ is odd} \\
\{0, 2, \ldots, k-2 \} & \text{if $k$ is even.}
\end{cases}$$
Otherwise, we have $\Sigma_k(G) = \mathbb{Z}_k$.
\end{remark}

Clearly, if $G$ is $1$-factorable, then $G$ is completely $k$-magic. The following theorem considers regular graphs that has a factor that is completely $k$- magic.

\begin{theorem}\label{factor2}
Let $r \geq 2$, $2 \le h \le r$, $k \ne 2$, and $G$ an $r$-regular graph. If $G$ has an $h$-factor that is completely $k$-magic, then $G$ is completely $k$-magic.
\end{theorem}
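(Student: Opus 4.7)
My plan is to exploit the edge partition $E(G) = E(H) \sqcup E(F)$, where $F := G - E(H)$ is the $(r-h)$-regular spanning subgraph of $G$ complementary to the given $h$-factor $H$. For any prescribed target $c \in \mathbb{Z}_k$, I will produce a $c$-sum $k$-magic labeling of $G$ by combining an appropriate magic labeling of $H$ (whose existence is granted by the hypothesis) with an extremely simple constant labeling of $F$.

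First, if $r = h$ then $G = H$ and there is nothing to prove, so I may assume $r - h \geq 1$. I label every edge of $F$ by the constant $1$; this is a valid labeling into $\mathbb{Z}_k^*$ because $1 \not\equiv 0 \pmod{k}$ for every $k \geq 2$, and $1 \neq 0$ in $\mathbb{Z}$ when $k = 1$ as well. Under this partial labeling, every vertex of $G$ collects the contribution $(r-h)\cdot 1 = r-h$ from edges incident to it in $F$. Consequently, to make the overall vertex sum in $G$ equal to $c$, it suffices to label the edges of $H$ so that each vertex of $H$ has sum $a := c - (r-h) \in \mathbb{Z}_k$.

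Such a labeling $\ell_H : E(H) \to \mathbb{Z}_k^*$ exists precisely because $H$ is completely $k$-magic; this is the one place where the full strength of the hypothesis is used. Splicing $\ell_H$ with the constant-$1$ labeling of $F$ then produces an edge labeling $\ell : E(G) \to \mathbb{Z}_k^*$ whose vertex sums are uniformly $a + (r-h) \equiv c \pmod{k}$, so $G$ is $c$-sum $k$-magic; since $c$ was arbitrary, $G$ is completely $k$-magic. The argument has no substantive obstacle — the construction is direct and the only verification is the trivial fact that $1$ is a permitted label — so the exclusion $k \neq 2$ plays no real role beyond ruling out the case where the hypothesis is already vacuous by Remark \ref{no_completely-2-magic}.
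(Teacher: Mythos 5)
Your proof is correct and is essentially identical to the paper's: both label the edges of $G \setminus H$ with the constant $1$ and use a $(c-(r-h))$-sum $k$-magic labeling of $H$, which exists since $H$ is completely $k$-magic, to obtain vertex sums of $c$ throughout $G$. Your closing observation that the hypothesis $k \neq 2$ only rules out a vacuous case is also consistent with Remark \ref{no_completely-2-magic}.
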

\begin{proof}
The case when $h = r$ is trivial, so we assume $h < r$.  Let $H$ be an $h$-factor of $G$ that is completely $k$-magic.  Let $\alpha = c - (r-h) \pmod{k}$ and $f_{\alpha}$ be an $\alpha$-sum $k$-magic labeling of $H$ for each $c \in \mathbb{Z}_k$.
		
Define $\ell_c: E(G) \rightarrow \mathbb{Z}_k^*$ by
$$\ell_c(e) =
\begin{cases}
f_{\alpha}(e) & \text{if $e \in E(H)$ }    \\
1 & \text{if $e \in E(G \setminus H)$. }
\end{cases}$$
The sum of the labels of the edges incident to each vertex of $G$ is $c-(r-h) + (r-h) \equiv c \pmod{k}$. Thus, $\ell_c$ is a $c$-sum $k$-magic labeling of $G$ for each $c \in \mathbb{Z}_k$. Hence, $G$ is completely $k$-magic. 	
	\end{proof}

%The following theorem considers $h$-factorable graphs for $h \geq 2$.

%\begin{theorem}\label{factor2}
%Let $h \geq 2$, $k \geq 3$, and $G$ an $h$-factorable graph. If one of the $h$-factors in the $h$-factorization of $G$ is completely $k$-magic, then $G$ is completely $k$-magic.
%\end{theorem}

%\begin{proof}
%Let $\{G_1, G_2, G_3, \dots, G_m\}$ be an $h$-factorization of $G$, where we assume that $G_1$ is completely $k$-magic. For each $d \in \mathbb{Z}_k$, let $\ell_d$ be a $d$-sum $k$-magic labeling of $G_1$.

%For each $c \in \mathbb{Z}_k$, define $f_c: E(G) \to \mathbb{Z}_k^*$ by
%$$f_c(e)=
%\begin{cases}
%\ell_{c-x}(e) & \text{if $e \in E(G_1)$} \\
%1 & \text{if $e \in E(G_i)$ for some $i=2,3,\ldots,m$,}
%\end{cases}$$
%where $x \equiv h(m-1) \pmod{k}$. Observe that $f_c$ is a $c$-sum $k$-magic labeling of $G$ for all $c \in \mathbb{Z}_k$. Thus, $G$ is completely $k$-magic.
%\end{proof}

The following construction will be useful in the proofs of our succeeding results.

\begin{remark}\label{G'}
Let $G$ be an $r$-regular graph with $E(G) = \{e_1,e_2,e_3,\ldots,e_m\}$, where $r \geq 1$. Then we can construct a graph $G'$ (with parallel edges) such that $V(G')=V(G)$ and $E(G')=E(G) \cup \{e_1',e_2',e_3',\ldots,e_m'\}$, where $e_i'$ is a duplicate edge of $e_i$ in $G$ for each $i$ (that is, edges $e_i$ and $e_i'$ have the same end vertices). By Theorem \ref{Petersen}, $G'$ has a $2h$-factor $H'$ for each $h$, $1 \leq h \leq r$. Also, $G' \setminus H'$ is a $(2r-2h)$-factor of $G'$ obtained by removing the edges of $H'$ from $G'$.
\end{remark}

\begin{theorem}\label{5-regular}
Let $G$ be a $5$-regular graph. Then $\mathbb{N} \setminus \{2,4\} \subseteq N(G)$.
\end{theorem}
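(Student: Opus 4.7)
Plan.

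The plan is to exhibit, for each $k \in \mathbb{N} \setminus \{2, 4\}$, an explicit zero-sum $k$-magic labeling of $G$. For every $k \notin \{2, 3, 4, 6\}$ a single uniform construction works, while $k = 3$ and $k = 6$ will be handled separately using Theorem~\ref{Dong&Wang}. For the main construction, I would apply Remark~\ref{G'} to form the doubled multigraph $G'$, which is $10$-regular, and invoke Theorem~\ref{Petersen} (componentwise if $G$ is disconnected) to extract a $4$-factor $F'$ of $G'$. For each edge $e$ of $G$, let $n_e \in \{0, 1, 2\}$ count how many of the two copies of $e$ lie in $F'$; then $\sum_{e \ni v} n_e = 4$ at every vertex $v$.

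I would then assign $\ell(e) \equiv 6, 1, -4 \pmod{k}$ according as $n_e = 2, 1, 0$. Writing $x, y, z$ for the number of edges incident to $v$ with $n_e = 2, 1, 0$ respectively, the relations $x + y + z = 5$ and $2x + y = 4$ force $y = 4 - 2x$ and $z = 1 + x$, so
\[
6x + y - 4z \;=\; 6x + (4 - 2x) - 4(1 + x) \;=\; 0
\]
in $\mathbb{Z}$, hence in $\mathbb{Z}_k$, regardless of which configuration $x \in \{0, 1, 2\}$ occurs at $v$. A direct residue check shows that $6$ and $-4$ are simultaneously nonzero in $\mathbb{Z}_k^*$ precisely when $k \notin \{2, 3, 4, 6\}$, so this step handles $k = 1$, $k = 5$, and every $k \geq 7$. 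For the two leftover moduli, Theorem~\ref{Dong&Wang} provides a zero-sum $3$-magic labeling $\ell_0 : E(G) \to \{1, 2\}$, which is already the conclusion for $k = 3$. For $k = 6$, each vertex sum of $\ell_0$ (viewed in $\mathbb{Z}$) lies in $[5, 10]$ and is a positive multiple of $3$, hence equals $6$ or $9$; consequently $2\ell_0$ takes values in $\{2, 4\} \subseteq \mathbb{Z}_6^*$ and produces vertex sums $12$ or $18$, both $\equiv 0 \pmod 6$.

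The main obstacle is identifying the triple $(6, 1, -4)$: its key property is that the weighted sum $6x + y - 4z$ vanishes identically against the two linear constraints that a $4$-factor of $G'$ imposes on $(x, y, z)$, so one assignment works at every vertex simultaneously regardless of the local configuration. Setting up this small linear system essentially forces the triple up to scaling, and the remaining work is the bookkeeping of checking nonzeroness of the three residues modulo $k$; the unavoidable degenerations at $k = 3$ and $k = 6$ (where $6 \equiv 0$) are exactly what force the brief supplementary Dong--Wang argument for those two moduli.
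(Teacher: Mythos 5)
Your proof is correct and follows essentially the same route as the paper: both pass to the $10$-regular doubled multigraph $G'$ of Remark~\ref{G'}, extract an even factor via Theorem~\ref{Petersen}, and label each edge of $G$ by a fixed integer triple according to how many of its two copies lie in that factor, so that the vertex sums vanish identically in $\mathbb{Z}$. The only difference is parametric: the paper uses a $2$-factor with the effective triple $(-8,-3,2)$, which degenerates at $k=8$ and is patched by a separate labeling, whereas you use a $4$-factor with $(6,1,-4)$, which degenerates at $k=6$ and is patched by doubling the Dong--Wang labeling; both versions are valid.
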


\begin{proof}
We know from Theorem \ref{Wang&Hu} and Theorem \ref{Dong&Wang} that $1,3 \in N(G)$. For $k \geq 5$, we consider two cases.

\textsc{Case 1.} Suppose $k \geq 5$ and $k \neq 8$. Using the construction and notation described in Remark \ref{G'}, let $H'$ be a $2$-factor of $G'$. Then $G' \setminus H'$ is an $8$-factor of $G'$.
	
Define a zero-sum $k$-magic labeling $\ell'$ on $G'$ by
$$\ell'(e) =
\begin{cases}
k-4 & \text{if $e \in E(H')$ } \\
1 & \text{if $e \in E(G' \setminus H')$.}
\end{cases}$$
Note that the labeling $\ell$ on $G$ defined by $\ell(e_i) = \ell'(e_i) + \ell'(e_i')$ for $e_i \in E(G)$ is a zero-sum $k$-magic labeling on $G$.

\textsc{Case 2.} Suppose $k=8$. Using again the construction in Remark \ref{G'}, let $H'$ be a $4$-factor of $G'$. Then $G' \setminus H'$ is a $6$-factor of $G'$.

Define a zero-sum labeling $\ell'$ on $G'$ by
$$\ell'(e) =
\begin{cases}
2 & \text{if $e \in E(H')$ } \\
4 & \text{if $e \in E(G' \setminus H')$.}
\end{cases}$$
Observe that the labeling $\ell$ on $G$ defined by $\ell(e_i) = \frac{1}{2}[\ell'(e_i)+\ell'(e_i')]$ for $e_i \in E(G)$ is a zero-sum $8$-magic labeling on $G$.
	
Therefore, $\mathbb{N} \setminus \{2, 4\} \subseteq N(G)$.
\end{proof}

Note that an odd-regular graph may not be zero-sum $4$-magic. It was remarked in \cite[Remark 10]{Akbari&Rahmati&Zare} that an odd-regular graph $G$ is not zero-sum $4$-magic if $G$ has a vertex such that every edge incident to it is a cut-edge.

\begin{theorem}\label{odd-regular}
Let $G$ be an $r$-regular graph, where $r \geq 3$ is odd and $k \geq 5$. Then $G$ is completely $k$-magic.
\end{theorem}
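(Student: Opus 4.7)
The plan is to combine the zero-sum result with an explicit construction based on the doubled multigraph $G'$ from Remark \ref{G'}. Because $r\geq 3$ is odd and $k\geq 5$, Theorem \ref{Akbari&Rahmati&ZareTheorem13} (for $r\neq 5$) or Theorem \ref{5-regular} (for $r=5$) gives $\mathbb{N}\setminus\{2,4\}\subseteq N(G)$; in particular $k\in N(G)$, so $0\in\Sigma_k(G)$ and a zero-sum $k$-magic labeling of $G$ exists. It then remains to realize every $c\in\mathbb{Z}_k^{*}$ as a magic sum.

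For this I would use the following template. By Theorem \ref{Petersen}, the $2r$-regular multigraph $G'$ has a $2$-factor $H'$, and then $G'\setminus H'$ is a $(2r-2)$-factor. Choose parameters $a,b\in\{1,\ldots,k-1\}$ and set $\ell'(e)=a$ for $e\in E(H')$ and $\ell'(e)=b$ otherwise. Each edge $e\in E(G)$ has a duplicate $e'\in E(G')$ with the same endpoints, so the unordered pair $\{\ell'(e),\ell'(e')\}$ is $\{a,a\}$, $\{a,b\}$, or $\{b,b\}$ according to how $e,e'$ distribute between $H'$ and $G'\setminus H'$. I would transfer $\ell'$ to a labeling $\ell:E(G)\to\mathbb{Z}_k^{*}$ in one of two ways: the \emph{sum transfer} $\ell(e)=\ell'(e)+\ell'(e')\pmod k$, which makes every edge label one of $2a,\,a+b,\,2b$ and produces the common vertex sum $2a+(2r-2)b\pmod k$; or the \emph{averaging transfer} $\ell(e)=(\ell'(e)+\ell'(e'))/2$, computed first as an integer in $\{1,\ldots,k-1\}$ and then reduced modulo $k$, which is well-defined provided $a\equiv b\pmod 2$, gives edge labels in $\{a,(a+b)/2,b\}\subseteq\{1,\ldots,k-1\}$, and produces the common vertex sum $a+(r-1)b\pmod k$.

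It remains to choose $a,b$ case by case on the parities of $k$ and $c$. If $k$ is odd, the sum transfer succeeds for every $c$: the congruence $2a+(2r-2)b\equiv c\pmod k$ is equivalent to $a+(r-1)b\equiv 2^{-1}c\pmod k$, and since $k\geq 5$ there is enough slack to choose $a,b\in\{1,\ldots,k-1\}$ so that $a+b\not\equiv 0\pmod k$ (passing to $b\in\{2,3\}$ in the finitely many residue classes of $c$ where $b=1$ does not work). If $k$ is even and $c$ is even the same sum transfer still works after dividing the congruence by $2$ and lifting $a$ to a residue avoiding $0$ and $k/2$. The main obstacle, and the point at which a new idea is needed, is the case $k$ even with $c$ odd: there the value $2a+(2r-2)b$ is always even, so no sum transfer can realize an odd magic sum. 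The averaging transfer resolves this, and the key combinatorial observation is that $r-1$ is even: setting $b=1$ and $a\equiv c-(r-1)\pmod k$ forces $a$ to be odd (matching the parity of $b=1$), so the averaging is well-defined; the three possible edge labels $a$, $1$, and $(a+1)/2$ all lie in $\{1,\ldots,k-1\}$ and are nonzero in $\mathbb{Z}_k$; and the vertex sum in $G$ is $a+(r-1)\equiv c\pmod k$. This halving trick, in the spirit of Case 2 of the proof of Theorem \ref{5-regular}, is the essential ingredient that closes the parity-obstructed case.
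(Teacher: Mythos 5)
Your proposal is correct and follows essentially the same route as the paper's own proof: establish $0\in\Sigma_k(G)$ from Theorems \ref{Akbari&Rahmati&ZareTheorem13} and \ref{5-regular}, pass to the doubled multigraph $G'$ of Remark \ref{G'} with a $2$-factor $H'$ and $(2r-2)$-factor $G'\setminus H'$, label the two factors constantly, and pull the labeling back to $G$ either by summing the two parallel labels or, in the parity-obstructed case of even $k$ and odd $c$, by halving their sum (well-defined exactly because $r-1$ is even). Your version merely streamlines the paper's case analysis by solving the resulting congruence for the two free labels $a,b$ directly, rather than fixing specific values tied to $\gcd(r,k)$ and patching the leftover magic sums via Remark \ref{l'} and an auxiliary $4$-factor construction.
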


\begin{proof}
We know from Theorems \ref{Akbari&Rahmati&ZareTheorem13} and \ref{5-regular} that $0 \in \Sigma_k(G)$. Let $E(G)=\{e_1,e_2,e_3,\ldots,e_m\}$. As constructed in Remark \ref{G'}, let $H'$ and $G' \setminus H'$ be a $2$-factor and $(2r-2)$-factor of $G'$, respectively. We consider two cases.
	
\textsc{Case 1.} Suppose $r \equiv 1 \pmod{k}$. Then $\gcd(r,k) = 1$. By Theorem \ref{TheoremAfterLemmaFromDummit&Foote}, $G$ is completely $k$-magic.
	
\textsc{Case 2.} Suppose $r \not\equiv 1 \pmod{k}$. Assume $\gcd(r,k) = d$ so that $r = ad$ and $k = bd$ for some positive integers $a$ and $b$. Note that, since $r$ is odd, $d$ is also odd. We consider two sub-cases.
	
\textsc{Sub-Case 2.1.} Suppose $k \geq 5$ is odd. Then $b$ is odd.
	
For each $c \in \mathbb{Z}_k^* \setminus \{k-b, k-2b\}$, define $\ell_c':E(G') \to \mathbb{Z}_k^*$ by
$$\ell_c'(e) =
\begin{cases}
x & \text{if $e \in E(H')$} \\
\frac{1}{2}(k+b) & \text{if $e \in E(G' \setminus H')$,}
\end{cases} $$
where $x = \frac{1}{2}(b+c)$ if $c$ is odd, and $x = \frac{1}{2}(b+c+k)$ if $c$ is even. Observe that $\ell_c'$ is a $c$-sum $k$-magic labeling of $G'$ for each $c \neq 0$.
	
For each $c \not\in \{0,k-b,k-2b\}$, define $\ell_c:E(G) \to \mathbb{Z}_k ^*$ by $\ell_c(e_i) = \ell_c'(e_i) + \ell_c'(e_i')$ for $1 \leq i \leq m$. Since $\ell_c'$ is a $c$-sum $k$-magic labeling of $G'$, $\ell_c$ is a $c$-sum $k$-magic labeling of $G$ for each $c \in \mathbb{Z}_k^* \setminus \{k-b,k-2b\}$.

	 %Replace $x$ by $b$ in the labeling $\ell'$ above. Observe that the sum of the labels of the edges incident to each vertex in $G'$ is $-2b \pmod{k}$. So, by defining $\ell$ by $\ell(e_i) = \ell'(e_i) + \ell'(e_i')$ for $1 \leq i \leq m$, we see that $G$ is $(k-2b)$-sum $k$-magic.
	
If $k \neq 3b$, then, by Remark \ref{l'}, $k-b, k-2b \in \Sigma_k(G)$. If $k = 3b$, it is enough to show that $k-2b \in \Sigma_k(G)$. To do that, we provide a different labeling using a different set of factors of $G'$. Let $J'$ and $G'\setminus J'$ be a $4$-factor and $(2r-4)$-factor of $G'$ respectively. In addition, we let $J' = J_1' \cup J_2'$, where $J_1'$ and $J_2'$ are $2$-factors of $J'$.

Define $\ell{}':E(G') \to \mathbb{Z}_k^*$ by
$$\ell'(e) =
\begin{cases}
\frac{b+1}{2} & \text{if $e \in E(J_1')$} \\
\frac{b-1}{2} & \text{if $e \in E(J_2')$} \\
b & \text{if $e \in E(G' \setminus J').$}
\end{cases} $$

Since $k = 3b$, $d = 3$ and $r = 3a$. Thus, the magic sum in $G'$ is given by $2(\frac{b+1}{2}) + 2(\frac{b-1}{2}) + b(2r-4) \equiv -2b \pmod{k}$. Define $\ell_{}:E(G) \to \mathbb{Z}_k ^*$ by $\ell_{}(e_i) = \ell'(e_i) + \ell'(e_i')$ for $1 \leq i \leq m$. Note that $\ell$ is also a $(k-2b$)-sum $k$-magic labeling of $G$.
	
\textsc{Sub-Case 2.2.} Suppose $k \geq 6$ is even. Then $b$ is even.
	
By labeling all the edges of $G$ with $\frac{1}{2}k$, we see that $\frac{1}{2}k \in \Sigma_k(G)$.
	
Suppose $r-1 \equiv \frac{1}{2}k \pmod{k}$. For each $c \in \mathbb{Z}_k^* \setminus \{k-1, \frac{1}{2}k\}$, define $\ell_c':E(G') \rightarrow \mathbb{Z}_k ^*$ by
$$\ell_c'(e) =
\begin{cases}
c & \text{if $e \in E(H')$ } \\
1 & \text{if $e \in E(G' \setminus H')$. }
\end{cases} $$
Observe that the sum of the labels of the edges incident to each vertex in $G'$ is $ 2(r-1) + 2c \equiv 2c \pmod{k}$. Using a similar argument as in Sub-Case 2.1, it can be shown that $G$ is also $e$-sum $k$-magic for all even $e \neq 0$. Thus, we are left to show that $G$ is $c$-sum $k$-magic as well for all odd $c$.
	
For each odd $c \neq k-1$, define $\ell_c:E(G) \rightarrow \mathbb{Z}_k ^*$ by $\ell_c(e_i) = \frac{1}{2}[\ell_c'(e_i) + \ell_c'(e_i')]$ for each $i$, $1 \leq i \leq m$. Note that, since $\ell_c'$ is a $2c$-sum $k$-magic labeling of $G'$, $\ell_c$ is a $c$-sum $k$-magic labeling of $G$ for each odd  $c\neq k-1$. Again, by Remark \ref{l'}, we see that $k-1 \in \Sigma_k(G)$.
	
Suppose $r-1 \equiv r_0 \pmod{k}$, where $r_0 \neq \frac{1}{2}k$. For each $c \in \mathbb{Z}_k^* \setminus \{r_0, r_0 + \frac{1}{2}k, r_0-1\}$, define $\ell_c':E(G') \to \mathbb{Z}_k^*$ by
$$\ell_c'(e) =
\begin{cases}
c-r_0 & \text{if $e \in E(H')$} \\
1 & \text{if $e \in E(G' \setminus H')$.}
\end{cases} $$
Observe that the sum of the labels of the edges incident to each vertex in $G'$ is $2r_0 + 2c - 2r_0 \equiv 2c \pmod{k}$. As in Sub-Case 2.1, it can be shown that $G$ is also even-sum $k$-magic. So again, we are left to show that $G$ is odd-sum $k$-magic.
	
As what we did earlier, for each odd $c \neq r_0-1$ (and, possibly, $r_0 + \frac{1}{2}k$), define $\ell_c:E(G) \to \mathbb{Z}_k ^*$ by $\ell_c(e_i) = \frac{1}{2}[\ell_c'(e_i) + \ell_c'(e_i')]$ for all $i$, $1 \leq i \leq m$. Since $\ell_c'$ is a $2c$-sum $k$-magic labeling of $G'$, $\ell_c$ is a $c$-sum $k$-magic labeling of $G$ for each odd $c \neq r_0-1$ (and, possibly, $r_0 + \frac{1}{2}k)$. If $r_0 - 1$ and $r_0 + \frac{1}{2}k$ are not inverses, then, by Remark \ref{l'}, $\mathbb{Z}_k^* \subset \Sigma_k(G)$.
	
If $r_0 - 1$ and $r_0 + \frac{1}{2}k$ are inverses, then it is enough to show that $r_0 - 1 \in \Sigma_k(G)$. Define $\ell'$ on $G'$ by
$$\ell'(e) =
\begin{cases}
k-1 & \text{if $e \in E(H')$ } \\
1 & \text{if $e \in E(G' \setminus H')$.}
\end{cases}$$
Note that the magic sum using $\ell'$ is $2r_0 - 2$. Define $\ell$ on $G$ by $\ell(e_i) = \frac{1}{2}[\ell'(e_i) + \ell'(e_i')]$ for $e_i \in E(G)$. Clearly, $\ell$ is an $(r_0 - 1)$-sum $k$-magic labeling on $G$. Thus, by Remark \ref{l'}, $r_0 + \frac{1}{2}k \in \Sigma_k(G)$, and so $\mathbb{Z}_k^* \subset \Sigma_k(G)$.
	
In any case, $G$ is completely $k$-magic.
\end{proof}

\begin{theorem}\label{2r-regular}
Let $k \geq 5$ and $G$ a $2r$-regular graph of order $n \geq 3$, where $r \geq 2$.
\begin{enumerate}
\item[{\rm (1)}] If $n$ is even, then $G$ is completely $k$-magic.
\item[{\rm (2)}] If $n$ is odd, then
\begin{enumerate}
\item[{\rm (i)}] $G$ is completely $k$-magic if $k$ is odd, and
\item[{\rm (ii)}] $\Sigma_k(G) = \{0, 2, 4, \dots , k-2\}$ if $k$ is even.
\end{enumerate}
\end{enumerate}
\end{theorem}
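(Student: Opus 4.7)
The plan is to establish $0 \in \Sigma_k(G)$ in all cases and then construct $c$-sum $k$-magic labelings for the remaining target residues, relying on Petersen's $2$-factor decomposition (Theorem~\ref{Petersen}) and the doubled-multigraph construction of Remark~\ref{G'}. Since $2r \ge 4$ forces $N(G) = \mathbb{N}$ via Theorem~\ref{Akbari&Rahmati&ZareTheorem13}, we immediately have $0 \in \Sigma_k(G)$ for every $k$. For part~(2)(ii), the containment $\Sigma_k(G) \subseteq \{0,2,\ldots,k-2\}$ follows at once from Theorem~\ref{odd order lemma} because $n$ is odd and $k$ is even.

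The core construction is a uniform one. Applying Theorem~\ref{Petersen} gives an edge-disjoint decomposition $G = F_1 \cup F_2 \cup \cdots \cup F_r$ into $2$-factors. Labelling every edge of $F_i$ with a common constant $a_i \in \mathbb{Z}_k^*$ produces the magic sum $2(a_1 + \cdots + a_r) \pmod k$, and since $r \ge 2$ there is enough slack to choose the $a_i$'s nonzero and realize any $c$ in the image of $x \mapsto 2x$ on $\mathbb{Z}_k$. That image is all of $\mathbb{Z}_k$ when $k$ is odd and exactly $\{0,2,\ldots,k-2\}$ when $k$ is even, which already takes care of part~(2)(i) in its entirety, the reverse containment in part~(2)(ii), and part~(1) whenever $k$ is odd or $c$ is even.

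The residual regime is part~(1) with $k$ even and $c$ odd, and here I would split on the parity of $r$. If $r \ge 3$ is odd, then because $n$ is even Theorem~\ref{Petersen} also supplies an $r$-factorization of $G$; any $r$-factor $H$ is an odd-regular graph of degree $r \ge 3$, hence completely $k$-magic by Theorem~\ref{odd-regular}, and Theorem~\ref{factor2} (with $h = r$) promotes this to $G$. If $r \ge 2$ is even, I would work instead inside the doubled multigraph $G'$ of Remark~\ref{G'}, which is $4r$-regular and rich with parallel pairs $\{e_j, e_j'\}$. The idea is to assign non-uniform labels across a carefully chosen $2$-factor $H'$ of $G'$ so as to produce an odd magic sum in $G'$, and then pull back via $\ell(e_j) = \ell'(e_j) + \ell'(e_j')$, which preserves the magic sum and yields a labeling of $G$ whose edge labels are, upon verification, all in $\mathbb{Z}_k^*$.

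The main obstacle will be this last sub-case. Any uniform labelling scheme on the $2$-factors of $G$ itself is forced to give an even magic sum modulo $k$, so the odd parity can only come from a non-uniform $G'$-side labelling. Ensuring that the resulting pull-back labels $\ell(e_j)$ are simultaneously nonzero for every odd $c \in \mathbb{Z}_k^*$ and every admissible even $r \ge 2$ is delicate, and I expect it to require the sort of ``excluded $c$-value'' subcase analysis that appears in the proof of Theorem~\ref{odd-regular}, organized around the residue of $r$ modulo $k$ and a small collection of collision values of $c$ on which the simplest labelling collapses.
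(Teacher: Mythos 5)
Most of your outline is sound, and your uniform-label device on the Petersen $2$-factors (constant $a_i$ on each $F_i$, giving magic sum $2(a_1+\cdots+a_r)$) is actually cleaner than the paper's treatment: it disposes of part (2)(i), the reverse inclusion in (2)(ii), and every even magic sum in part (1) in one stroke, where the paper argues (2)(i) by stitching together $c$-sum labelings of the individual $2$-factors. Your handling of part (1) with $r\ge 3$ odd ($r$-factorization of the even-order graph, Theorem \ref{odd-regular} on an $r$-factor, then Theorem \ref{factor2}) is exactly the paper's argument.

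The genuine gap is the sub-case you yourself flag: $k$ even, $c$ odd, $r$ even. Your proposed mechanism --- non-uniform labels on a $2$-factor $H'$ of $G'$, pulled back via $\ell(e_j)=\ell'(e_j)+\ell'(e_j')$ --- runs into a parity obstruction you do not resolve: a constant-vertex-sum labeling of a $2$-factor, restricted to any odd cycle of that factor, is forced to have sum $2x$, and the complementary $(4r-2)$-factor labeled uniformly also contributes an even amount, so nothing guarantees an odd magic sum in $G'$ this way. The paper's key ingredient, which you never invoke, is an \emph{odd-regular} factor of $G'$: since a $4$-regular graph is $2$-edge-connected, $G'$ is an $8$-regular $4$-edge-connected multigraph, and Theorem \ref{regularfactors} (Gallai) supplies a $3$-factor $H'$; labeling $H'$ with $2c$ and $G'\setminus H'$ with $k-c$ yields vertex sum $3(2c)+5(k-c)\equiv c\pmod k$ for odd $c$, after which the pullback and a short ``excluded value'' analysis (around $c=\tfrac12 k$ and $c=\tfrac14 k$, plus a separate labeling for $c=\tfrac12 k$) finish the job. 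Without some source of an odd-degree factor your plan cannot produce odd sums. You also miss the reduction that makes the even-$r$ case tractable: for $r\ge 4$ even, Petersen gives $G$ a $6$-factor, which is completely $k$-magic by the odd sub-case ($6=2\cdot 3$, even order), so Theorem \ref{factor2} applies and only $r=2$ ever requires the $G'$ machinery.
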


\begin{proof}
Let $E(G)=\{e_1,e_2,e_3,\ldots,e_m\}$. By Theorem \ref{Akbari&Rahmati&ZareTheorem13}, $G$ is zero-sum $k$-magic.
	
(1) Suppose $r=2$. To prove the theorem, we only show that $\mathbb{Z}_k^* \subset \Sigma_k(G)$. We consider two cases.

\textsc{Case 1.} Suppose $k$ is odd. Then $\gcd(4,k) = 1$. By Theorem \ref{TheoremAfterLemmaFromDummit&Foote}, $\mathbb{Z}_k^* \subseteq \Sigma_k(G)$.

\textsc{Case 2.} Suppose $k$ is even. It is not difficult to see that, being $4$-regular, $G$ is $2$-edge connected. By Remark \ref{G'}, we can construct $G'$ so that $G'$ is a $4$-edge-connected $8$-regular graph. By Theorem \ref{regularfactors}, $G'$ has a $3$-factor, say $H'$. Let $G' \setminus H'$ be the $5$-factor of $G'$ obtained by removing the edges of $H'$ from $G'$.

\textsc{Sub-Case 2.1.} Let $k = 2d$, $d$ even. For each $c \in \mathbb{Z}_k^* \setminus \{\frac{1}{2}k, \frac{1}{4}k\}$, define $f_c:E(G') \rightarrow \mathbb{Z}_k^*$ by
$$f_c(e) =
\begin{cases}
2c & \text{if $e \in E(H')$ }    \\
k-c & \text{if $e \in E(G' \setminus H')$. }
\end{cases} $$
Observe that the sum of the labels of the edges incident to each of the vertices in $G'$ is equal to $5(k-c) + 3(2c) \equiv c \pmod{k}$. This shows that $f_c$ is a $c$-sum $k$-magic labeling of $G'$ for all $c \neq 0, \frac{1}{2}k, \frac{1}{4}k$. By Remark \ref{l'}, $\frac{1}{4}k \in \Sigma_k(G')$.

For each $c \in \mathbb{Z}_k^* \setminus \{\frac{1}{2}k, \frac{1}{4}k\}$, define $\ell_c: E(G) \rightarrow \mathbb{Z}_k^*$ by $\ell_c(e_i) = f_c(e_i) + f_c(e_i')$ for all $i$, $1 \leq i \leq m$. Clearly, $\ell_c$ is a $c$-sum $k$-magic labeling of $G$ for each $c \in \mathbb{Z}_k^* \setminus \{\frac{1}{2}k, \frac{1}{4}k\}$. By Remark \ref{l'}, we see that $\mathbb{Z}_k^* \setminus \{\frac{1}{2}k\} \subset \Sigma_k(G)$.

By Theorem \ref{Petersen}, $G$ is $2$-factorable. Let $G_1$ and $G_2$ be the two $2$-factors of $G$. Label the edges in $G_1$ with $d$ and the edges in $G_2$ with $\frac{1}{2}(k-d)$. This shows that $d = \frac{1}{2}k \in \Sigma_k(G)$.

\textsc{Sub-Case 2.2.} Let $k = 2d$, $d\geq3$ odd. Observe that, for $c \neq 0, \frac{1}{2}k$, the labeling $\ell_c$ in Sub-Case 2.1 is a $c$-sum $k$-magic labeling of $G$. To complete the proof, we only need to show that $\frac{1}{2}k \in \Sigma_k(G)$.

Let $d \neq 3$ and $9$. We give a labeling for the factors of $G'$ defined above (namely, $H'$ and $G' \setminus H'$) and the $2$-factors of $G$ (namely, $G_1$ and $G_2$) to show that $G$ is $d$-sum $k$-magic.

Let $f: E(G) \rightarrow \mathbb{Z}_k^*$ be defined by
$$f(e) =
\begin{cases}
d+1 & \text{if $e \in E(G_1)$}    \\
\frac{1}{2}(k-d-1) & \text{if $e \in E(G_2)$.}
\end{cases} $$
Clearly, $f$ is $(d+1)$-sum $k$-magic labeling of $G$.

Let $g':E(G') \rightarrow \mathbb{Z}_k^*$ be defined by
$$g'(e) =
\begin{cases}
k-2 & \text{if $e \in E(H')$ }    \\
1 & \text{if $e \in E(G' \setminus H')$. }
\end{cases} $$
Define also $g:E(G) \rightarrow \mathbb{Z}_k^*$ by $g(e_i) = g'(e_i) + g'(e_i')$ for all $i$, $1 \leq i \leq m$. Note that $g'$ is a $(k-1)$-sum $k$-magic labeling of $G'$, so $g$ is a $(k-1)$-sum $k$-magic labeling of $G$.

Finally, define $\ell :E(G) \rightarrow \mathbb{Z}_k^*$ by $\ell(e) = f(e) + g(e)$ for all $e \in E(G)$. Since $f$ and $g$ are $(d+1)$-sum and $(k-1)$-sum $k$-magic labeling of $G$ respectively, $\ell $ is a $d$-sum $k$-magic labeling of $G$.

Suppose $d = 3$ or $9$. Define $g':E(G') \rightarrow \mathbb{Z}_k^*$ be defined by
$$g'(e) =
\begin{cases}
2x & \text{if $e \in E(H')$ }    \\
1 & \text{if $e \in E(G' \setminus H')$, }
\end{cases} $$
where $x = 1$ if $d = 3$, and $x = 3$ if $d = 9$. Note that $g'$ is a $5$-sum $k$-magic labeling of $G'$. Define a labeling $g$ on $G$ by $g(e_i) = g'(e_i) + g'(e_i')+1$ for all $i$, $1 \leq i \leq m$. Note that $g$ is a $d$-sum $k$-magic labeling on $G$. Thus, $d = \frac{1}{2}k \in \Sigma_k(G)$, and so $G$ is completely $k$-magic.

Suppose $r \geq 3$ is odd. By Theorem \ref{Petersen}, $G$ is $r$-factorable. By Theorem \ref{odd-regular}, the $r$-factors of $G$ are completely $k$-magic for all $k \geq 5$. Thus, by Theorem \ref{factor2}, $G$ is also completely $k$-magic.
		
If $r \geq 4$ is even, then, by Theorem \ref{Petersen}, $G$ has a $6$-factor, say $H$. Using the case for $r$ is odd, $H$ is completely $k$-magic. Thus, by Theorem \ref{factor2}, $G$ is also completely $k$-magic.

(2(i)) By Theorem \ref{Petersen}, $G$ is $2$-factorable. Let $G_1, G_2, \dots, G_r$ be the $2$-factors of $G$. If $k$ is odd, then, by Remark \ref{rc_n}, $\mathbb{Z}_k^* \subseteq \sum_k(G_i)$ for all $i$, $1 \leq i \leq r$. For each $i$ and $c \in \mathbb{Z}_k^*$, let $\ell_c^i$ be a $c$-sum $k$-magic labeling of $G_i$. We consider two cases.
	
\textsc{Case 1.} Suppose $r \equiv 1 \pmod{k}$. For each $c \in \mathbb{Z}_k^*$, define $\ell_c:E(G) \to \mathbb{Z}_k^*$ by
$$\ell_c(e)=
\begin{cases}
\ell_c^1(e) & \text{if $e \in E(G_1)$} \\
\ell_1^i(e) & \text{if $e \in E(G_i)$ for some $i=2,3,\ldots,r$.}
\end{cases}$$
Note that $\ell_c$ is a $c$-sum $k$-magic labeling of $G$ for all $c\neq 0$.
	
\textsc{Case 2.} Suppose $r \not\equiv 1 \pmod{k}$. For each $c \in \mathbb{Z}_k^* \setminus \{r-1 \pmod{k}\}$, define $l_c:E(G) \rightarrow \mathbb{Z}_k^*$ by	
$$l_c(e)=
\begin{cases}
l_{c-x}^1(e) & \text{if $e \in E(G_1)$} \\
l_1^i(e) & \text{if $e \in E(G_i)$ for some $i=2,3,\ldots,r$,}
\end{cases}$$
where $x \equiv r-1 \pmod{k}$. The sum of the labels of the edges incident to each vertex is $c \pmod{k}$. Thus, $G$ is $c$-sum $k$-magic for each $c \neq x$. By Remark \ref{l'}, $G$ is $x$-sum $k$-magic since $G$ is $(k-x)$-sum $k$-magic. In this case, $G$ is completely $k$-magic.
	
(2(ii)) This follows from Remark \ref{rc_n}, Lemma \ref{odd order lemma}, and Theorem \ref{factor2}.
\end{proof}

\begin{theorem}\label{4-magic}
Let $r \geq 3$, and $G$ a zero-sum $4$-magic $r$-regular graph. Then
\begin{enumerate}
\item[{\rm (1)}] If the order of $G$ is even, then $G$ is completely $4$-magic.
\item[{\rm (2)}] If the order of $G$ is odd, then $\Sigma_4(G) = \{0, 2\}$.
\end{enumerate}
\end{theorem}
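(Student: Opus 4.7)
The argument naturally divides by the parity of $n$.

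For Part (2), since $n$ is odd, the handshake lemma forces $r$ to be even. Theorem \ref{odd order lemma} then gives $\Sigma_4(G) \subseteq \{0,2\}$, and the zero-sum hypothesis provides $0 \in \Sigma_4(G)$; thus only $2 \in \Sigma_4(G)$ remains to be shown. I split on $r \pmod 4$. If $r \equiv 2 \pmod 4$, the constant labeling $\ell \equiv 1$ gives magic sum $r \equiv 2$. If $r \equiv 0 \pmod 4$, I invoke Theorem \ref{Petersen} to extract a $2$-factor $F$ of $G$ and label $F$ with $2$ and $G \setminus F$ with $1$; the magic sum is then $4 + (r-2) = r + 2 \equiv 2 \pmod 4$.

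For Part (1), the case $r$ odd follows directly from Theorem \ref{TheoremAfterLemmaFromDummit&Foote} (which gives $\{1,2,3\} \subseteq \Sigma_4(G)$ since $\gcd(r,4)=1$), combined with the zero-sum hypothesis. For $r$ even (so $r \geq 4$), the magic sum $2$ is obtained exactly as in Part (2), and Remark \ref{l'} reduces the sum $3$ to the sum $1$; it therefore suffices to construct a $1$-sum $4$-magic labeling. Writing $r = 2s$, the even-order clause of Theorem \ref{Petersen} gives an $s$-factorization $G = H_1 \cup H_2$. If $s$ is odd (so $r \equiv 2 \pmod 4$, necessarily $s \geq 3$), then $\gcd(s,4) = 1$ and Theorem \ref{TheoremAfterLemmaFromDummit&Foote} gives $\{1,2,3\} \subseteq \Sigma_4(H_i)$; I label $H_1$ as $2$-sum $4$-magic and $H_2$ as $3$-sum $4$-magic, and the concatenation has magic sum $2+3 \equiv 1 \pmod 4$ on $G$, as desired.

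The main obstacle is the remaining case $r \equiv 0 \pmod 4$, in which both $s$-factors $H_i$ are of even regularity and Theorem \ref{TheoremAfterLemmaFromDummit&Foote} does not apply. My plan is to iterate Petersen's factorization: writing $r = 2^t u$ with $u$ odd, I recursively split the even-regular factors to obtain $G = \bigcup_{j=1}^{2^t} K_j$ with each $K_j$ being $u$-regular. When $u \geq 3$, Theorem \ref{TheoremAfterLemmaFromDummit&Foote} applies to every $K_j$, and I can select $c_j$-sum labelings with $\sum_j c_j \equiv 1 \pmod 4$ (feasible since $2^t \geq 2$ and $\{1,2,3\}$ generates $\mathbb{Z}_4$ additively). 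When $u = 1$, the recursion produces a $1$-factor of $G$, and Theorem \ref{Magic&PerfectMatching} finishes via the zero-sum hypothesis. Ensuring that Petersen's even-order hypothesis survives the recursion is delicate and is handled by applying Petersen per connected component; the base case $r = 4$ (which terminates at $2$-regular factors that may contain odd cycles) is the most subtle, and there I would either locate a bipartite $2$-factor of $G$ and invoke Theorem \ref{factor2}, or exploit the structure of the given zero-sum labeling directly (noting, for instance, that the label-$2$ edges always form a spanning subgraph with every vertex of even degree).
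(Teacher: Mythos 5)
Your Part (2) and the cases $r$ odd and $r\equiv 2\pmod 4$ of Part (1) are correct and essentially follow the paper's route (handshake plus Theorem \ref{odd order lemma} plus an explicit $2$-sum labeling for Part (2); Theorem \ref{TheoremAfterLemmaFromDummit&Foote} for $r$ odd; the $s$-factorization into two odd-regular factors for $r\equiv 2\pmod 4$). The genuine gap is exactly where you flag ``the main obstacle'': $r\equiv 0\pmod 4$, and above all $r=4$. None of your three fallback ideas closes it. The iterated Petersen factorization stalls at the last step: splitting a $2u$-regular factor into two $u$-factors ($u$ odd) needs the \emph{even-order} clause of Theorem \ref{Petersen} applied to each connected component of that factor, and a component of an intermediate even-regular factor can perfectly well have odd order, so the $u$-factorization (and, when $u=1$, the perfect matching) need not exist. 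Concretely, for $G=2K_5$ (which is $4$-regular, of even order, and zero-sum $4$-magic by Theorem \ref{Akbari&Rahmati&ZareTheorem13}) there is no $1$-factor, and every $2$-factor is a union of two $5$-cycles, so there is also no bipartite $2$-factor to feed into Theorem \ref{factor2}. Finally, your observation that the label-$2$ edges of a zero-sum labeling form an even subgraph is correct but is never converted into a $1$-sum labeling, so that branch is a sketch, not a proof.

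The missing idea, which is how the paper handles $r=4$, is to leave the category of simple graphs: duplicate every edge of $G$ to form an $8$-regular multigraph $G'$, which is $4$-edge-connected, then invoke Gallai's theorem (Theorem \ref{regularfactors}, condition (1) with $r=8$, $k=5$, $\lambda=4$) to extract a $5$-factor $H'$ of $G'$; labeling $H'$ with $1$ and the complementary $3$-factor with $3$ gives vertex sums $5+9=14$, and pulling back to $G$ via $\ell(e_i)=\tfrac12[f(e_i)+f(e_i')]$ yields an odd magic sum, whence both $1$ and $3$ lie in $\Sigma_4(G)$ by Remark \ref{l'}. For $r=2x$ with $x\ge 4$ even, the paper then avoids deep recursion entirely by splitting off a single $6$-factor (which exists by Theorem \ref{Petersen} with no order hypothesis) and its $(2x-6)$-regular complement, both $2\cdot(\mathrm{odd})$-regular, and reusing the $x$-odd sub-case. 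You should replace your recursive plan with this one-step decomposition and supply the $G'$/Gallai construction for $r=4$.
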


\begin{proof}
(1) Suppose the order of $G$ is even. We consider two cases.

\textsc{Case 1.} Suppose $r$ is odd. Clearly, $\gcd(r,4) = 1$, and so, by Theorem \ref{TheoremAfterLemmaFromDummit&Foote}, $\mathbb{Z}_4^* \subset \Sigma_4(G)$.

\textsc{Case 2.} Suppose $r = 2x$ for some $x \geq 2$. We consider two sub-cases.

\textsc{Sub-Case 2.1.} Suppose $x$ is odd. By Theorem \ref{Petersen}, $G$ is $x$-factorable. Let $G_1$ and $G_2$ be the two edge-disjoint $x$-factors of $G$. From Case 1, $\mathbb{Z}_4^*$ is a subset of both $\Sigma_4(G_1)$ and $\Sigma_4(G_2)$. Thus, we have $\mathbb{Z}_4^* \subset \Sigma_4(G)$.

\textsc{Sub-Case 2.2.} Suppose $x$ is even. If $x = 2$, then, as observed previously, $G$ is $2$-edge connected. By Remark \ref{G'}, we can construct $G'$ so that $G'$ is a $4$-edge-connected $8$-regular graph. By Theorem \ref{regularfactors}, $G'$ has a $5$-factor, say $H'$. Let $G' \setminus H'$ be the $3$-factor of $G'$ obtained by removing the edges of $H'$ from $G'$.

Define $f:E(G') \rightarrow \mathbb{Z}_k^*$ by
$$f(e) =
\begin{cases}
1 & \text{if $e \in E(H')$ }    \\
3 & \text{if $e \in E(G' \setminus H')$. }
\end{cases} $$
Observe that the sum of the labels of the edges incident to each of the vertices in $G'$ is $5(1) + 3(3) \equiv 2 \pmod{4}$. Define $\ell: E(G) \rightarrow \mathbb{Z}_k^*$ by $\ell(e_i) = \frac{1}{2}[f(e_i) + f(e_i')]$ for all $i$, $1 \leq i \leq m$. Clearly, $\ell$ is a $1$-sum $k$-magic labeling of $G$. By Remark \ref{l'}, we see that $G$ is $3$-sum $4$-magic as well.

To show that $G$ is $2$-sum $4$-magic, we consider a different labeling for $G$. By Theorem \ref{Petersen}, $G$ is $2$-factorable. Let $G_1$ and $G_2$ be the two $2$-factors of $G$. Label the edges in $G_1$ with $2$ and the edges in $G_2$ with $1$. This shows that $G$ is $2$-sum $4$-magic.

Suppose $x \ge 4$. By Theorem \ref{Petersen}, $G$ has a $2y$-factor for each $1 \leq y \leq x$. In particular, $G$ has a $6$-factor, say $H$. Let $G \setminus H$ be the $(2x-6)$-factor of $G$ obtained by removing the edges of $H$ from $G$. By Sub-Case 2.1, $\mathbb{Z}_4^*$ is a subset of both $\Sigma_4(H)$ and $\Sigma_4(G \setminus H)$. Again, it is not difficult to see that $\mathbb{Z}_4^* \subset \Sigma_4(G)$.

(2) Suppose the order of $G$ is odd. In this case, we only consider $2r$-regular graphs, $r \geq 2$. By Lemma \ref{odd order lemma}, $G$ is not $c$-sum $4$-magic for both $c = 1$ and $c=3$. To show that $G$ is $2$-sum $4$-magic, observe that, by Theorem \ref{Petersen}, $G$ is $2$-factorable. Let $G_1,G_2, \ldots, G_r$ be the $r$ edge-disjoint $2$-factors of $G$. Label the edges in $G_1$ with $1$, and label the edges in $G_i$ with $2$ for all $i \neq 1$. This labeling shows that $G$ is $2$-sum $4$-magic. By assumption, $0 \in \Sigma_4(G)$. Thus, $\Sigma_4(G) = \{0,2\}$.
\end{proof}

The last theorem to complete the proof of the Main Theorem characterizes all completely $3$-magic $r$-regular graphs, where $r \geq 3$.

\begin{theorem}
Let $G$ be an $r$-regular graph, where $r \geq 3$.
\begin{enumerate}
\item[{\rm (1)}] If $r \not\equiv 0 \pmod{3}$ or $r \equiv 0 \pmod{6}$, then $G$ is completely $3$-magic.
\item[{\rm (2)}] If $r \equiv 0 \pmod{3}$ and $r$ odd, then $G$ is completely $3$-magic if and only if $G$ has a factor $H$ such that $d_H(v) \equiv 1 \pmod{3}$ for all $v \in V(H)$.
\end{enumerate}
\end{theorem}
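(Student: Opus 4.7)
The plan is to split the argument along the two parts of the statement. For part (1), I would handle the two sub-cases separately. If $r \not\equiv 0 \pmod{3}$, then $\gcd(r,3) = 1$, so Theorem \ref{TheoremAfterLemmaFromDummit&Foote} immediately yields $\{1,2\} \subseteq \Sigma_3(G)$. To put $0$ into $\Sigma_3(G)$, Theorem \ref{Akbari&Rahmati&ZareTheorem13} handles every such $r \geq 3$ with $r \neq 5$, and Theorem \ref{Dong&Wang} handles the remaining case $r = 5$. Thus $\Sigma_3(G) = \mathbb{Z}_3$.

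For the case $r \equiv 0 \pmod{6}$, write $r = 6t$; since $r$ is even, Theorem \ref{Petersen} gives a $2$-factorization $G = G_1 \cup G_2 \cup \cdots \cup G_{3t}$. For each target $c \in \{0,1,2\}$, I would pick $m \in \{0,1,2\}$ with $2m \equiv c \pmod{3}$ and label the edges of $m$ of these $2$-factors with $2$ while labeling all remaining edges with $1$. Each vertex has degree $2$ in every $2$-factor, so its sum of incident labels is $2(3t + m) \equiv 2m \equiv c \pmod{3}$. This realizes every $c \in \mathbb{Z}_3$ and completes part (1).

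For part (2), with $r$ odd and $r \equiv 0 \pmod{3}$, I would construct explicit labelings in both directions. For sufficiency, given a factor $H$ with $d_H(v) \equiv 1 \pmod{3}$, label every edge of $G$ with $1$ to get sum $r \equiv 0$; label $E(H)$ with $2$ and $E(G \setminus H)$ with $1$ to get sum $r + d_H(v) \equiv 1$; and label $E(H)$ with $1$ and $E(G \setminus H)$ with $2$ to get sum $2r - d_H(v) \equiv 2 \pmod{3}$. For necessity, fix a $1$-sum $3$-magic labeling $\ell : E(G) \to \{1,2\}$ and let $H$ be the spanning subgraph with $E(H) = \ell^{-1}(2)$. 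For any vertex $v$, setting $a = d_H(v)$ and $b = r - a$, the labeling condition gives $2a + b \equiv 1 \pmod{3}$; combined with $a + b = r \equiv 0$, this forces $a \equiv 1 \pmod{3}$, so $H$ is the desired factor (automatically with no isolated vertices, since $d_H(v) \geq 1$). No single step is a serious obstacle; the only delicate point is the $r = 5$ gap in Theorem \ref{Akbari&Rahmati&ZareTheorem13}, which is bridged precisely by Theorem \ref{Dong&Wang}.
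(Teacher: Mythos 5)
Your proposal is correct, and in part (2) the necessity argument is essentially identical to the paper's (the same counting $2a+b\equiv 1$, $a+b\equiv 0 \Rightarrow a\equiv 1 \pmod 3$ on the subgraph of edges labeled $2$). The differences are in how you realize the magic sums. For $r\not\equiv 0\pmod 3$ the paper does not invoke Theorem \ref{TheoremAfterLemmaFromDummit&Foote}; it simply labels every edge with the same constant and applies Remark \ref{l'}, which is marginally more elementary, though your route via $\gcd(r,3)=1$ is equally valid. For $r\equiv 0\pmod 6$ the paper extracts a $4$-factor $H$ (and the complementary $(6y-4)$-factor), observes these fall under the previously settled residue classes, and concludes via Theorem \ref{factor2}; you instead label a full Petersen $2$-factorization directly, assigning $2$ to $m$ of the $2$-factors and $1$ to the rest so that the vertex sum is $2m\pmod 3$. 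Your version is more self-contained and avoids Theorem \ref{factor2} altogether. Finally, in the sufficiency direction of part (2) you are actually more complete than the paper: you exhibit all three sums explicitly (in particular getting $0$ from the all-ones labeling since $r\equiv 0\pmod 3$, with no appeal to the zero-sum theorems), whereas the paper only writes out the $1$-sum labeling and leaves the $0$- and $2$-sum cases to the reader via Theorems \ref{Akbari&Rahmati&ZareTheorem13}, \ref{Dong&Wang} and Remark \ref{l'}. Both approaches are sound; yours trades a couple of citations for slightly longer explicit constructions.
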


\begin{proof} (1) Suppose $r \equiv 1 \pmod{3}$. By Theorem \ref{Akbari&Rahmati&ZareTheorem13}, $G$ is zero-sum $3$-magic. By labeling the edges of $G$ with $1$, the sum of the labels of the edges incident to each vertex is $r \equiv 1 \pmod{3}$, and $G$ is $1$-sum $3$-magic. By Remark \ref{l'}, $G$ is also $2$-sum $3$-magic.

Suppose $r \equiv 2 \pmod{3}$. By Theorems \ref{Akbari&Rahmati&ZareTheorem13} and \ref{Dong&Wang}, $G$ is zero-sum $3$-magic. By using Remark \ref{l'} again and by labeling the edges of $G$ with $2$, it follows that $G$ is $1$-sum and $2$-sum $3$-magic.

Let $r = 2(3y)$, $y \geq 1$. By Theorem \ref{Petersen}, $G$ has a $2z$-factor for each $1 \leq z \leq 3y$. Let $H$ be a $4$-factor of $G$ and $G \setminus H$ be the $(6y-4)$-factor of $G$. As considered above, both $H$ and $G \setminus H$ are completely $3$-magic. Thus, by Theorem \ref{factor2}, $G$ is completely $3$-magic.

(2) Suppose $G$ has a factor $H$ such that $d_H(v) \equiv 1 \pmod{3}$ for all $v \in V(H)$. Denote by $G\setminus H$ the factor of $G$ obtained by removing the edges in $G$ that is an edge in $H$. Since $G$ is $3x$-regular, $x \geq 1$, we have $d_{G \setminus H}(v) \equiv 2 \pmod{3}$ for all $v \in V(G \setminus H)$. Label each edge in $H$ with $2$ and each edge in $G \setminus H$ with $1$. Note that the sum of the labels of the edges incident to each vertex in $G$ is $2(1) + 1(2) \equiv 1 \pmod{3}$. This shows that $G$ is $1$-sum $3$-magic.

Conversely, suppose $G$ is completely $3$-magic. It follows that $G$ is $1$-sum $3$-magic. Since $G$ is $3x$-regular (where $x \geq 1$), for any $1$-sum $3$-magic labeling of $G$ and for each vertex $v \in V(G)$, $v$ must be incident to $p$ edges (where $p \equiv 1 \pmod{3}$) with label $2$ and $q$ edges (where $q \equiv 2 \pmod{3}$) with label $1$. Let $H'$ be a subgraph of $G$ such that an edge $e \in E(H')$ if and only if the label of $e$ is $2$. Clearly, $H'$ is a factor of $G$ and that $d_{H'}(v) \equiv 1 \pmod{3}$ for all $v \in V(H')$.
\end{proof}

\begin{corollary}
Let $G$ be an $r$-regular graph, where $r \geq 3$. Then $G$ is completely $3$-magic if and only if one of the following conditions holds:
\begin{enumerate}
\item[{\rm (1)}] $r \not\equiv 0 \pmod{3}$,
\item[{\rm (2)}] $r \equiv 0 \pmod{6}$, or
\item[{\rm (3)}] $r \equiv 0 \pmod{3}$, $r$ odd, and $G$ has a factor $H$ such that $d_H(v) \equiv 1 \pmod{3}$ for all $v \in V(H)$.
\end{enumerate}
\end{corollary}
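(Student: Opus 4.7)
The plan is to observe that this corollary is a direct restatement of the theorem that immediately precedes it. The three conditions (1), (2), (3) in the corollary partition the possible values of $r \geq 3$ according to divisibility by $3$ and parity: either $r \not\equiv 0 \pmod 3$, or $r \equiv 0 \pmod 3$ with $r$ even (equivalently $r \equiv 0 \pmod 6$), or $r \equiv 0 \pmod 3$ with $r$ odd. So the reasoning splits neatly along these three cases and there is no real additional content beyond bookkeeping.

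For the forward direction, I would assume $G$ is completely $3$-magic and simply check which of the three parity/divisibility cases $r$ falls into. In the first two cases, conditions (1) or (2) of the corollary hold automatically without invoking anything. In the third case ($r \equiv 0 \pmod 3$, $r$ odd), the ``only if'' half of part (2) of the preceding theorem immediately yields a factor $H$ with $d_H(v) \equiv 1 \pmod 3$ for all $v \in V(H)$, so condition (3) holds.

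For the backward direction, I would assume one of (1), (2), (3) holds and apply the theorem case-by-case. If (1) holds (i.e., $r \not\equiv 0 \pmod 3$) or (2) holds (i.e., $r \equiv 0 \pmod 6$), then part (1) of the preceding theorem directly gives that $G$ is completely $3$-magic. If (3) holds, then the ``if'' half of part (2) of the preceding theorem furnishes a completely $3$-magic labeling.

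The main (and only) obstacle is really just writing the case split cleanly; all the substance has already been carried out in the preceding theorem, so no new construction, labeling, or factorization argument is required. I expect the proof to be three or four lines long.
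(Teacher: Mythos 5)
Your proposal is correct and matches the paper's (implicit) treatment: the corollary is stated without proof precisely because it is the immediate restatement of the preceding theorem obtained by splitting $r$ into the three exhaustive cases $r \not\equiv 0 \pmod{3}$, $r \equiv 0 \pmod{6}$, and $r \equiv 0 \pmod{3}$ with $r$ odd. Your case check in both directions is exactly the intended bookkeeping.
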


\end{document}